
 \documentclass[12pt]{amsart}

 \setlength{\textheight}{23cm}
\setlength{\textwidth}{16cm}
\setlength{\topmargin}{-0.8cm}
\setlength{\parskip}{0.3\baselineskip}
\hoffset=-1.4cm
\usepackage{amssymb, amsmath, amsfonts}
\usepackage[raiselinks=false,colorlinks=true,citecolor=blue,urlcolor=blue,
linkcolor=blue,bookmarksopen=true,pdftex]{hyperref}
 
 \usepackage{enumerate}
\usepackage[mathscr]{eucal}
\newtheorem{theorem}{Theorem}[section]
\newtheorem{proposition}[theorem]{Proposition}
\newtheorem{lemma}[theorem]{Lemma}
\newtheorem{corollary}[theorem]{Corollary}
\newtheorem{remark}[theorem]{Remark}
\newtheorem{definition}[theorem]{Definition}
\newtheorem{example}[theorem]{Example}

\renewcommand{\hom}{\textrm{Hom}}
\newcommand{\wt}{\widetilde}

\renewcommand{\span}{\textrm{span}}

\newcommand{\flag}{\textrm{Flag}}
 
\newcommand{\fix}{\textrm{Fix}}
\begin{document}
\baselineskip=15.5pt
\title[Generalized Dold manifolds]{On generalized Dold manifolds} 
\author[A. Nath]{Avijit Nath} 
\author[P. Sankaran]{Parameswaran Sankaran}
\address{Institute of Mathematical Sciences, (HBNI), 
CIT Campus, Taramani, Chennai 600113}

\email{avijitnath@imsc.res.in}
\email{sankaran@imsc.res.in}
\subjclass[2010]{57R25, 57R20.}
\keywords{Dold manifolds, flag manifolds, Stiefel-Whitney classes, stable parallelizability, cobordism}
\thispagestyle{empty}
\date{}
\dedicatory{Dedicated to Professor D. S. Nagaraj on the occasion of his sixtieth birthday}
\thanks{Both authors were partially supported by a XII Plan Project, Department of Atomic Energy, Government of India.}
\begin{abstract}
Let $X$ be a smooth manifold with a (smooth) involution $\sigma:X\to X$ such that $\fix(\sigma)\ne \emptyset$.
We call the space $P(m,X):=\mathbb{S}^m\times X/\!\sim$ where $(v,x)\sim (-v,\sigma(x))$ a generalized 
Dold manifold.  When $X$ is an almost complex manifold and the differential $T\sigma: TX\to TX$ is 
conjugate complex linear on each fibre, we obtain a formula for the Stiefel-Whitney polynomial 
of $P(m,X)$ when $H^1(X;\mathbb{Z}_2)=0$.  We obtain results on stable parallelizability of $P(m,X)$ 
and a very general criterion for the (non) vanishing of the unoriented cobordism class  $[P(m,X)]$ 
in terms of the corresponding properties for $X$.   These results are applied to the case when $X$ 
is a complex flag manifold.

\end{abstract}

\maketitle

\section{Introduction} \label{intro}

Let $P(m,n)$ denote the space obtained as the quotient by the cyclic group $\mathbb{Z}_2$-action on the product 
$\mathbb{S}^m\times \mathbb{C}P^n$ generated by the involution $(u,L)\mapsto (-u,\bar L), u\in \mathbb{S}^m, L\in \mathbb{C}P^n$ 
where $\bar{L}$ denotes the complex conjugation.  
The spaces $P(m,n)$, which seem to have first appeared in the work of Wu, 
are called Dold manifolds, after it was shown by 
Dold \cite{dold} that, for suitable values of $m,n$, the cobordism classes 
of $P(m,n)$ serve as generators in odd degrees for the unoriented 
cobordism algebra $\mathfrak{N}$.  Dold manifolds have been extensively studied and have received renewed attention in recent years; see \cite{korbas}, \cite{novotny} and also \cite{nt}, \cite{thakur}, and \cite{ct}.

The construction of Dold manifolds suggests, among others, the following generalization.
Consider an involution on a Hausdorff topological space  $\sigma:X\to X$ with 
non-empty fixed point set and consider the space $P(m,X,\sigma)$ obtained as the quotient of $\mathbb{S}^m\times X$ by the action of $\mathbb{Z}_2$ defined by the fixed point free involution $(v,x)\mapsto (-v,\sigma(x))$.   We obtain a locally 
trivial fibre bundle with projection $\pi: P(m,X,\sigma)\to \mathbb{R}P^m$ and fibre space $X$.  If $x_0$ 
is a fixed point of $\sigma$, then the bundle 
admits a cross-section $s:\mathbb{R}P^m\to P(m,X,\sigma)$ defined as $s([v])=[v,x_0]$.  
If $X$ is a smooth manifold and if $\sigma$ is smooth, then the above bundle and the cross-section are smooth.  

In this paper we study certain manifold-properties of $P(m,X,\sigma)$  (or more briefly $P(m,X)$) where $X$ is a closed connected smooth manifold 
with an almost complex structure $J:TX\to TX$ and $\sigma$ is a conjugation, that is, the differential 
$T\sigma: TX\to TX$ and $J$ anti-commute: $T\sigma\circ J=-J\circ T\sigma$.   
We give a description of the tangent bundle of $P(m,X)$.  Assuming that $\fix(\sigma)\ne \emptyset$ and $H^1(X;\mathbb{Z}_2)=0$, we 
obtain a formula for the Stiefel-Whitney classes of $P(m,X)$ (Theorem \ref{swformulaforp}) and a necessary and sufficient condition 
for $P(m,X)$ to admit a spin structure (Theorem \ref{spin}).   
We also obtain results on the stable parallelizability of the $P(m,X)$  (Theorem \ref{stableparallel}) and  the vanishing of their (unoriented) cobordism class in the cobordism ring $\mathfrak{N}$ (Theorem \ref{null-nonorient}).     

Recall that a smooth manifold $M$ is said to be parallelizable (resp. stably parallelizable) if its tangent bundle 
$\tau M$ (resp. $\epsilon_\mathbb{R}\oplus \tau M$) is trivial.   

By the celebrated work of Adams \cite{adams} on the vector field problem for spheres, one knows that 
the (additive) order of the element $([\zeta]-1)\in KO(\mathbb{R}P^m)$ equals $2^{\varphi(m)}$ where 
$\zeta$ is the Hopf line bundle over $\mathbb{R}P^{m}$ and 
$\varphi(m)$ is the number of positive integers $j\le m$ such that $j\equiv 0,1,2,$ or $4\mod 8$.

The complex flag manifold $\mathbb{C}G(n_1,\ldots, n_r)$ is the homogeneous space $U(n)/(U(n_1)\times\cdots\times U(n_r))$, 
where the $n_j\ge 1$ are positive integers and $n=\sum_{1\le j\le r} n_j$.  These manifolds  are well-known to be complex 
projective varieties.   
We denote by $P(m; n_1,\ldots,n_r)$ the space $P(m,\mathbb{C}G(n_1,\ldots,n_r))$.   
The complete flag manifold $\mathbb{C}G(1,\ldots,1)$ is denoted $\flag(\mathbb{C}^n)$.   
Note that $\mathbb{C}G(n_1,n_2)$ is the complex Grassmann manifold $\mathbb{C}G_{n,n_1}$ 
of $n_1$-dimensional vector subspaces of $\mathbb{C}^{n}$.

We highlight here the results on stable parallelizability and cobordism for a restricted 
classes of generalized Dold manifolds as in these cases the results 
are nearly complete.

\begin{theorem} \label{parallel-grassmann}  Let $m\ge 1$ and $r\ge 2$. \\
(i) The manifold $P(m;n_1,\ldots, n_r)$ is stably parallelizable if and only if  $n_j=1$  for all $j$ and 
$ 2^{\varphi(m)}$ 
divides $(m+1+{n\choose 2})$.  \\ (ii) Suppose that $P:=P(m;1,\ldots,1)$ is stably parallelizable. Then it is 
parallelizable if $\rho(m+1)>\rho(m+1+n(n-1))$.  If $m$ is even, then $P$ is not parallelizable.
 \end{theorem}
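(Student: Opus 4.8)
\emph{Part (i).} The plan is to derive this from the general criterion of Theorem~\ref{stableparallel} together with the classification of stably parallelizable complex flag manifolds. First I would verify that each $X=\mathbb{C}G(n_1,\dots,n_r)$ satisfies the standing hypotheses: it is a closed connected complex (hence almost complex) manifold, so $H^1(X;\mathbb{Z}_2)=0$; fibrewise complex conjugation on $\mathbb{C}^n$ induces an involution $\sigma$ on $X$ whose differential is conjugate complex linear, and $\fix(\sigma)$ is the non-empty manifold of real flags. Theorem~\ref{stableparallel} then reduces the stable parallelizability of $P(m;n_1,\dots,n_r)$ to that of $\mathbb{C}G(n_1,\dots,n_r)$ together with the divisibility of $m+1+\dim_{\mathbb{C}}\mathbb{C}G(n_1,\dots,n_r)$ by $2^{\varphi(m)}$. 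It remains to use the fact that $\mathbb{C}G(n_1,\dots,n_r)$ is stably parallelizable exactly when all $n_j=1$: the complete flag manifold $\flag(\mathbb{C}^n)$ is stably parallelizable (for instance, since $\bigoplus_i L_i$ is trivial one gets $\tau\flag(\mathbb{C}^n)\oplus\overline{\tau\flag(\mathbb{C}^n)}\cong\epsilon_{\mathbb{C}}^{n^2-n}$, so $[\tau\flag(\mathbb{C}^n)]$ is $2$-torsion in $\widetilde{KO}$ and all of its characteristic classes vanish), whereas if some $n_j\ge 2$ then $\mathbb{C}G(n_1,\dots,n_r)$ carries a non-zero Pontryagin class. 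Since $\dim_{\mathbb{C}}\flag(\mathbb{C}^n)=\binom n2$, this is exactly the stated divisibility.

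\emph{Part (ii).} Write $P=P(m;1,\dots,1)=P(m,\flag(\mathbb{C}^n))$ and $d=\dim P=m+n(n-1)$, and assume $P$ is stably parallelizable. Then $w_1(P)=0$, so $P$ is orientable, and since a stably trivial bundle of rank exceeding the dimension of the base is trivial we in fact have $\tau P\oplus\epsilon_{\mathbb{R}}\cong\epsilon_{\mathbb{R}}^{d+1}$. Suppose first that $m$, hence $d$, is even. I would use multiplicativity of the Euler characteristic in the bundle $\flag(\mathbb{C}^n)\to P\xrightarrow{\pi}\mathbb{R}P^m$, which holds in spite of the non-trivial monodromy because $\chi(\mathbb{R}P^m;\mathcal{L})=\operatorname{rk}(\mathcal{L})\cdot\chi(\mathbb{R}P^m)$ for any local system $\mathcal{L}$; hence $\chi(P)=\chi(\mathbb{R}P^m)\chi(\flag(\mathbb{C}^n))=1\cdot n!\neq 0$. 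As $P$ is orientable and even-dimensional, $e(\tau P)=\chi(P)\mu$ for a generator $\mu$ of $H^d(P;\mathbb{Z})\cong\mathbb{Z}$, so $e(\tau P)\neq 0$, $\tau P$ is not trivial, and $P$ is not parallelizable.

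Now suppose $\rho(m+1)>\rho(m+1+n(n-1))=\rho(d+1)$. Since $\rho(d+1)\ge 1$, this forces $\rho(m+1)\ge 2$, so $m$ and hence $d$ are odd and $\chi(P)=0$. The Radon--Hurwitz vector fields on $S^m$ are equivariant for the antipodal involution, hence descend to $\rho(m+1)-1$ linearly independent vector fields on $\mathbb{R}P^m$; pulling these back and using the tangent bundle description $\tau P\cong\pi^*\tau\mathbb{R}P^m\oplus T$ (with $T$ the bundle along the fibres) I obtain $\tau P\cong\epsilon_{\mathbb{R}}^{\rho(m+1)-1}\oplus\mathcal{Q}$, where $\mathcal{Q}$ is stably trivial of rank $d+1-\rho(m+1)$. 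The remaining, and crucial, step is to prove $\mathcal{Q}$ trivial; since $\rho(m+1)\ge 2$ its rank is at most $d-1$, so this does not follow merely by counting dimensions. The idea is to examine the top obstruction to trivialising $\mathcal{Q}$ over the closed $d$-manifold $P$ --- a class in a cyclic group $H^d(P;\pi_{d-1}(V))$ for a suitable Stiefel manifold $V$ --- and to compare it with the obstruction for $\tau S^d$, whose order is governed, via Adams' theorem, by the order of $[\zeta]-1$ in $KO(\mathbb{R}P^d)$; the inequality $\rho(m+1)-1\ge\rho(d+1)=\operatorname{span}(S^d)+1$ says that the partial frame of $\tau P$ already constructed strictly exceeds $\operatorname{span}(S^d)$, and this should force the obstruction to vanish and $\tau P$ to be trivial. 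I expect this transfer of the sphere bound $\operatorname{span}(S^d)=\rho(d+1)-1$ over to $P$ through the fibration --- rather than the preceding reductions, which are formal given Theorem~\ref{stableparallel}, the tangent bundle formula, and the multiplicativity of $\chi$ --- to be the one genuinely non-routine ingredient.
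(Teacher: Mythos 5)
Your overall architecture matches the paper's --- reduce part (i) to Theorem \ref{stableparallel} plus the classification of stably parallelizable complex flag manifolds, and handle part (ii) via the Euler characteristic and a span comparison --- but there are two genuine gaps. In part (i), the ``if'' direction is not a formal consequence of Theorem \ref{stableparallel}: part (ii) of that theorem requires not merely that $\rho(\tau X)\oplus n\epsilon_{\mathbb{R}}$ be trivial, but that the trivialization can be chosen so that $T\sigma\oplus\varepsilon_{k,n-k}$ becomes the standard involution $\varepsilon_{d+k,d+n-k}$ of $(2d+n)\epsilon_{\mathbb{R}}$. This matters because, as \S\ref{dependence} stresses, $\hat\eta$ depends on the lift $\hat\sigma$ and not only on $\eta$ (already $\hat\epsilon_{\mathbb{R}}$ can be either $\epsilon_{\mathbb{R}}$ or $\xi$), so stable parallelizability of $\flag(\mathbb{C}^n)$ plus the divisibility condition does not by itself determine $\hat\tau\flag(\mathbb{C}^n)$. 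The paper supplies the missing computation in Proposition \ref{fullflag-parallel} via Lam's functor $\mu^2$, which yields an explicit trivialization $\oplus_j\mu^2(\gamma_j)\oplus\tau\flag(\mathbb{C}^n)\cong n^2\epsilon_{\mathbb{R}}$ on which the conjugation acts in the required normal form, giving $n\epsilon_{\mathbb{R}}\oplus\hat\tau\flag(\mathbb{C}^n)\cong n\epsilon_{\mathbb{R}}\oplus{n\choose 2}(\epsilon_{\mathbb{R}}\oplus\xi)$ and hence exactly $m+1+{n\choose 2}$ copies of $\xi$ in the stable tangent bundle of $P$. Separately, your justification that $\flag(\mathbb{C}^n)$ is stably parallelizable is not valid as written: $\tau\oplus\bar\tau$ trivial only shows $2[\rho(\tau)]=0$ in $\widetilde{KO}$, and neither $2$-torsion nor vanishing of characteristic classes implies stable triviality (the fact itself is standard, and the $\mu^2$ identity above also proves it).

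In part (ii), the crucial step --- that your stably trivial complement $\mathcal{Q}$ is actually trivial --- is left as an expectation. The missing ingredient is the Bredon--Kosi\'nski theorem \cite{bk}: a closed stably parallelizable $n$-manifold is either parallelizable or has span exactly $\span(\mathbb{S}^n)=\rho(n+1)-1$. With it the argument closes in one line: $\span(P)\ge\span(\mathbb{R}P^m)=\rho(m+1)-1>\rho(m+1+n(n-1))-1=\span(\mathbb{S}^{\dim P})$, so $P$ cannot have the span of the sphere and must be parallelizable. Your proposed comparison of top obstructions is in effect an attempt to reprove this theorem and, as sketched, does not go through --- the inequality on the number of constructed frame fields does not by itself force the top obstruction in $H^{\dim P}(P;\pi_{\dim P-1}(V))$ to vanish. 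The even-$m$ case via $\chi(P)=n!\ne 0$ is correct and agrees with the paper.
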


The case when the flag manifold is a complex projective space corresponds to the classical Dold manifold $P(m,n-1).$ 
 In this special case the above result is  
due to J. Korba\v{s} \cite{korbas}.   See 
also \cite{ucci} in which J. Ucci characterized classical Dold manifolds which admit codimension-one 
embeddings in the Euclidean space. 


\begin{theorem} \label{null-grassmann}
Let $1\le k\le n/2$ and let $m\ge 1$.\\(i) If 
$\nu_2(k)<\nu_2(n)$, then $[P(m,\mathbb{C}G_{n,k})]=0$ in $\mathfrak{N}$. \\
(ii)  If $m\equiv 0\mod 2$ and if 
$\nu_2(k)\ge \nu_2(n)$,  then $[P(m,\mathbb{C}G_{n,k})]\ne 0$.
\end{theorem}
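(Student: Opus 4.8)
The plan is to reduce the cobordism question for $P(m,\mathbb{C}G_{n,k})$ to a computation of Stiefel--Whitney numbers, using the tangent bundle description and the Stiefel--Whitney polynomial formula of Theorem \ref{swformulaforp} (which applies since $H^1(\mathbb{C}G_{n,k};\bz_2)=0$ and the complex Grassmannian carries the required conjugation $L\mapsto \bar L$ on $k$-planes in $\bc^n$, with nonempty fixed set the real Grassmannian $G_{n,k}(\br)$). A closed manifold is null-cobordant if and only if all its Stiefel--Whitney numbers vanish, so part (i) amounts to showing that \emph{every} top-degree monomial in the Stiefel--Whitney classes of $P:=P(m,\mathbb{C}G_{n,k})$ evaluates to zero on the fundamental class, while part (ii) amounts to exhibiting \emph{one} monomial that does not.

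For part (i), first I would set up the cohomology of $P$. Writing $c\in H^1(P;\bz_2)$ for the class pulled back from $\mathbb{R}P^m$ via $\pi$, and importing the mod $2$ Borel-type description of $H^*(\mathbb{C}G_{n,k};\bz_2)$ (generated by Chern classes $c_1,\dots,c_k$ of the tautological bundle, subject to the usual relations), the Leray--Hirsch theorem gives $H^*(P;\bz_2)$ as a module, and the total Stiefel--Whitney class $w(P)$ is the product of $(1+c)^{m+1}$-type factor coming from the base-plus-section with the image under the formula of Theorem \ref{swformulaforp} of the relevant Chern data of $\mathbb{C}G_{n,k}$; concretely each complex Chern root $x_i$ of the relevant bundle contributes a factor $(1+x_i+c)$ after reduction mod $2$ (this is the mechanism by which the conjugation twists $TX$ by the line bundle associated to $c$). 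Then I would use the hypothesis $\nu_2(k)<\nu_2(n)$: this is exactly the classical numerical condition (going back to the computation of $w(\mathbb{C}G_{n,k})$) under which the complex Grassmannian $\mathbb{C}G_{n,k}$ itself is null-cobordant, equivalently $w(\mathbb{C}G_{n,k})$ is a square, equivalently all its Stiefel--Whitney numbers vanish. The key step is to show that this squaring phenomenon propagates: when $w(\mathbb{C}G_{n,k})=(\alpha)^2$ for some polynomial $\alpha$ in the $c_i$, the twisted total class appearing in $w(P)$ also becomes, after accounting for the $c$-variable, either a square or annihilated when paired against the fundamental class of $P$ (the fundamental class of $P$ integrates $c^m$ against the fundamental class of $\mathbb{C}G_{n,k}$). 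I expect the cleanest route is to observe that $Sq^1$ kills the top class and to show $w(P)$ lies in the image of the squaring (Frobenius) map on $H^*(P;\bz_2)$ modulo the ideal generated by $Sq^1$-exact terms, forcing all top Stiefel--Whitney numbers to vanish.

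For part (ii), with $m$ even and $\nu_2(k)\ge\nu_2(n)$, I would instead produce a nonzero Stiefel--Whitney number explicitly. Since $m$ is even, the factor coming from the projective base contributes $w(\mathbb{R}P^m)=(1+c)^{m+1}$ with $m+1$ odd, so the monomial $c^m$ appears in $w(P)$ with coefficient $1$; more importantly the relevant Grassmannian factor, under the condition $\nu_2(k)\ge\nu_2(n)$, is \emph{not} a square, and one knows (again from the classical computation of Stiefel--Whitney numbers of $\mathbb{C}G_{n,k}$, e.g. via the nonvanishing of a specific number such as the one detected in degree $2k(n-k)$) that there is a monomial $\mu$ of top degree in the $c_i$ with $\langle \mu, [\mathbb{C}G_{n,k}]\rangle\ne 0$. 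I would then check that the product $c^m\cdot\mu$, viewed as a top monomial in $w(P)$ via Leray--Hirsch, pairs nontrivially with $[P]$: since $\dim P=m+2k(n-k)$ and the fundamental class of $P$ evaluates $c^m\mu$ as $\langle\mu,[\mathbb{C}G_{n,k}]\rangle$ times $\langle c^m, [\mathbb{R}P^m]\rangle=1$, this gives a nonzero Stiefel--Whitney number, hence $[P(m,\mathbb{C}G_{n,k})]\ne 0$.

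The main obstacle is the propagation-of-squaring argument in part (i): it is easy to see that $\mathbb{C}G_{n,k}$ being null-cobordant is not by itself enough to conclude that a fibre bundle over $\mathbb{R}P^m$ with fibre $\mathbb{C}G_{n,k}$ is null-cobordant (twisted bundles can have nonzero numbers even over null-cobordant fibres), so one must genuinely use the precise form of the twist in Theorem \ref{swformulaforp} together with the precise numerical hypothesis $\nu_2(k)<\nu_2(n)$. I anticipate this requires either an inductive/generating-function manipulation showing $w(P)$ is, up to $Sq^1$-coboundaries, a square in $H^*(P;\bz_2)$, or an explicit vanishing argument monomial-by-monomial using the Grassmannian relations; the former is more conceptual and I would attempt it first, falling back on the latter (a finite but intricate calculation) if needed. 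The general criterion promised in Theorem \ref{null-nonorient} should in fact package exactly this mechanism, so part (i) here should be a more-or-less direct corollary of that theorem once the squaring condition for $\mathbb{C}G_{n,k}$ under $\nu_2(k)<\nu_2(n)$ is invoked, and part (ii) a direct corollary of its converse-type statement.
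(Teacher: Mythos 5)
Your part (ii) is essentially the paper's argument and is fine in outline: under $\nu_2(k)\ge\nu_2(n)$ one quotes $[\mathbb{C}G_{n,k}]\ne 0$ (the paper cites \cite{sankaran} for exactly this), notes that $d=k(n-k)$ is then even, so with $m$ even one has $w_1(P)=x$, and evaluates the Stiefel--Whitney monomial corresponding to $J=1^m.2I$, which equals $x^m\tilde c_I$ and pairs to $w_{2I}[\mathbb{C}G_{n,k}]\ne 0$; this is precisely Theorem \ref{null-nonorient}(i). The only detail you gloss over is that $x^m\mu$ must actually be realized as a \emph{product of Stiefel--Whitney classes of $P$} (not merely as a class in $H^*(P;\mathbb{Z}_2)$ via Leray--Hirsch); this works because $x^{m+1}=0$ kills the twisting corrections in $w_{2j}(P)$ and because $w_1(P)=x$, which needs $m\equiv d\bmod 2$ -- a parity check you omit but which does hold here.

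Part (i) contains a genuine gap: the step you yourself identify as the main obstacle (the ``propagation of squaring'') is never supplied, and the fallback you hope for does not exist. Theorem \ref{null-nonorient} gives \emph{non}-vanishing criteria and, contrapositively, a vanishing statement only for $m=1$; it contains nothing of the form $[X]=0\Rightarrow[P(m,X)]=0$ for general $m$, and as you note no such soft statement can be true, since bundles over $\mathbb{R}P^m$ with null-cobordant fibre need not bound. (Your parenthetical ``null-cobordant, equivalently $w$ is a square'' is also not a correct equivalence; bounding is equivalent to the vanishing of all Stiefel--Whitney numbers.) The paper's actual proof of (i) is by an entirely different mechanism, with no Stiefel--Whitney computation at all: writing $n=2^pn_0$, $n_0$ odd, it produces elements $\theta_1,\dots,\theta_{2p}$ of the complex Clifford algebra $C^c_{2p}\cong\mathbb{C}(2^p)$ that pairwise anticommute, square to $\pm 1$, and (after a case analysis mod $8$, Lemma \ref{clifford1}) act as \emph{real} matrices on $\mathbb{C}^n=n_0\,\mathbb{C}^{2^p}$. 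The maps $t_i(V)=\theta_i(V)$ then define a smooth $(\mathbb{Z}/2)^{2p}$-action on $\mathbb{C}G_{n,k}$; a stationary point would be a $C^c_{2p}$-submodule, hence of dimension divisible by $2^p$, which is excluded exactly by $\nu_2(k)<\nu_2(n)$. Realness of the $\theta_i$ makes the $t_i$ commute with the conjugation $L\mapsto\bar L$, so the action descends to a stationary-point-free $(\mathbb{Z}/2)^{2p}$-action on $P(m,\mathbb{C}G_{n,k})$, and Conner--Floyd \cite[Theorem 30.1]{cf} gives $[P(m,\mathbb{C}G_{n,k})]=0$. Note where the 2-adic hypothesis enters: through Clifford-module dimensions, not through any identity among characteristic classes; to complete your route you would have to carry out the full Stiefel--Whitney-number vanishing for $P$ directly, which is not sketched and not obviously tractable.
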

The above theorem leaves out the case when $m\ge 1$ is odd and $\nu_2(k)\ge \nu_2(n)$.   See Remark \ref{null-flag} 
for results on the vanishing of $[P(m;n_1,\ldots,n_r)]$.    

Our proofs make use of basic concepts in the theory of vector bundles and characteristic classes.  
We first introduce, in \S2, the notion of a $\sigma$-conjugate complex vector bundle over $X$ where 
$\sigma$ is an involution on $X$ and associate to each such complex vector bundle $\omega$ a 
real vector bundle over $\hat\omega$.  We establish a splitting principle to obtain a formula 
for the Stiefel-Whitney classes of $\hat \omega$ in terms of certain `cohomology extensions' of 
Stiefel-Whitney classes of $\omega$, assuming 
that $H^1(X;\mathbb{Z}_2)=0$.   This leads to a formula for the Stiefel-Whitney classes of $P(m,X)$ 
when $X$ is a smooth almost complex manifold and $\sigma$ is a complex conjugation. 
Proof of Theorem \ref{parallel-grassmann} uses the main result of \cite{sz}, the Bredon-Kosi\'nski's theorem \cite{bk}, and a 
certain functor $\mu^2$ introduced by Lam \cite{lam} to study immersions of flag manifolds.
Proof of Theorem \ref{null-grassmann} uses basic facts from the theory of 
Clifford algebras, a result of Conner and Floyd \cite[Theorem 30.1]{cf} concerning cobordism of manifolds admitting 
stationary point free action of elementary abelian $2$-group, and the main theorem of \cite{sankaran}.


\section{Vector bundles over $P(m,X,\sigma)$}
Let $\sigma:X\to X$ be an involution of a path connected paracompact Hausdorff topological space and let $\omega$ be a complex vector bundle 
over $X$.  Denote by $\omega^\vee$ the dual vector bundle $\hom_\mathbb{C}(\omega, \epsilon_\mathbb{C})$.  Here $\epsilon_\mathbb{F}$ denotes the  
the trivial $\mathbb{F}$-line bundle over $X$ where $\mathbb{F}=\mathbb{R}, \mathbb{C}$.  Note that, since $X$ is paracompact, $\omega$ admits a Hermitian metric and 
so $\omega^\vee$ is isomorphic to the conjugate bundle $\bar{\omega}$.  The following definition 
generalises the notion of a conjugation of an almost complex manifold in the sense of Conner and Floyd \cite[\S 24]{cf}. 

\begin{definition} \label{sigmaconjugate}
Let $\sigma:X\to X$ be an involution and let $\omega$ be a complex vector 
bundle over $X$.  A $\sigma$-{\em conjugation} on $\omega$ is 
an involutive bundle map  
$\hat{\sigma}:E(\omega)\to E(\omega)$ that 
covers $\sigma$ which is conjugate complex linear on the fibres of $\omega$.  If such a $\hat\sigma$ exists, we say that 
$(\omega,\hat\sigma)$ (or more briefly $\omega$) is a $\sigma$-{\em conjugate bundle}.  
\end{definition}

Note that if $\omega$ is a $\sigma$-conjugate bundle, then $\bar{\omega}\cong \sigma^*(\omega)$.  

\begin{example}\label{basic}
{\em 
(i)  Let $\sigma$ be any involution on $X$.
When $\omega=n\epsilon_\mathbb{C}$, the trivial complex vector bundle of rank $n$, we have $E(\omega)=X\times \mathbb{C}^n$.  The {\it standard} $\sigma$-conjugation on $\omega$ is defined as  
$\hat\sigma(x, \sum z_je_j)=(\sigma(x), \sum \bar{z}_je_j)$.   Here $\{e_j\}_{1\le j\le n}$ is the standard basis of $\mathbb{C}^n$.  Thus $(n\epsilon_\mathbb{C},\hat\sigma)$ is $\sigma$-conjugate bundle. 

(ii)  Let $X=\mathbb{C}G_{n,k}$ and let $\sigma:X\to X$ be the involution $L\mapsto \bar{L}$.   Then the standard $\sigma$-conjugation on $n\epsilon_\mathbb{C}$ defines, by restriction, a $\sigma$-conjugation 
of the canonical 
$k$-plane bundle $\gamma_{n,k}$.  Explicitly, $ v\mapsto \bar{v}, ~v\in L\in \mathbb{C}G_{n,k},$ 
is the required involutive bundle map $\hat\sigma:E(\gamma_{n,k})\to E(\gamma_{n,k})$ that covers $\sigma$.  
Similarly the orthogonal complement $\beta_{n,k}:=\gamma_{n,k}^\perp$ is also a $\sigma$-conjugate bundle.

(iii) If $X\subset \mathbb{C}P^N$ is a complex projective manifold defined over $\mathbb{R}$ and $\sigma:X\to X$ is the 
restriction of complex conjugation $[z]\mapsto [\bar{z}]$, then the tangent bundle 
$\tau X$ of $X$ is a $\sigma$-conjugate bundle.  Indeed the differential of $\sigma$, namely $T\sigma: TX\to TX$ is the required bundle map 
$\hat{\sigma}$ of $\tau X$ that covers $\sigma$.  As mentioned above, this classical case was generalized by Conner and Floyd  \cite[\S 24]{cf} to the case when $X$ is an almost complex 
manifold. 

(iv)  If $\omega, \eta$ are $\sigma$-conjugate vector bundles over $X$, then so are $\Lambda^r(\omega), 
\hom_\mathbb{C}(\omega,\eta), \omega\otimes \eta$, and $\omega\oplus \eta$.  
For example, if $\hat\sigma$ and $\tilde \sigma$ are $\sigma$-conjugations on $\omega$ and $\eta$ respectively, 
both covering $\sigma$,  then 
$\hom_\mathbb{C}(\omega, \eta)\ni f\mapsto \tilde \sigma\circ f 
\circ \hat\sigma \in \hom_\mathbb{C}(\omega,\eta)$ is verified to be a conjugate complex linear bundle involution of 
$\hom_\mathbb{C}(\omega,\eta)$
that covers $\sigma$.

(v) Any subbundle $\eta$ of a $\sigma$-conjugate complex vector bundle $\omega$ over $X$ is also $\sigma$-conjugate 
provided $\hat\sigma:E(\omega)\to E(\omega)$ satisfies $\hat\sigma(E(\eta))=E(\eta).$ }
\end{example}

\subsection{Vector bundle associated to $(\eta,\hat{\sigma})$} \label{assocbundle}
Let $\eta $ be a {\it real} vector bundle over $X$ with projection $p_\eta:E(\eta)\to X$ 
and let $\hat{\sigma}: E(\eta)\to E(\eta)$ be an involutive 
bundle isomorphism that covers $\sigma$.   We obtain a real vector bundle, denoted $\hat{\eta},$ over 
$P(m,X,\sigma)$ as follows:  $(v,e)\mapsto (-v,\hat\sigma(e))$ defines a fixed point free involution of 
$\mathbb{S}^m\times E(\eta)$ with orbit space $P(m,E(\eta),\hat\sigma)$.  The map 
$p_{\hat\eta}: P(m,E(\eta),\hat\sigma)\to P(m,X,\sigma)$ defined as 
$[v,e]\mapsto [v,p_\eta(e)]$ is the projection of the required bundle $\hat\eta$.  

This construction is 
applicable when $\eta=\rho(\omega)$, the underlying real vector bundle of a $\sigma$-conjugate 
complex vector bundle 
$(\omega,\hat \sigma)$.  
If $\beta$ is a (real) subbundle of $\eta$ such that $\hat\sigma(E(\beta))=E(\beta)$, then the restriction of 
$\hat\sigma$ to $E(\beta)$ defines a bundle $\hat \beta$ which is evidently a subbundle   
of $\hat\eta$.

We shall denote by $\xi$ the real line bundle over $P(m,X,\sigma)$, often referred to as the Hopf bundle, associated to the double 
cover $\mathbb{S}^m\times X\to P(m,X,\sigma)$.  Its total space has the description 
$\mathbb{S}^m\times X\times_{\mathbb{Z}_2} \mathbb{R}$ consisting of elements $[v,x,t]=\{(v,x,t), (-v,\sigma(x), -t)\},  v\in \mathbb{S}^m,x\in X, t\in 
\mathbb{R}$.   
Denote by $\pi:P(m,X,\sigma)\to \mathbb{R}P^m$ the map $[v,x]\mapsto [v]$.  Then $\pi$ is the projection 
of a fibre bundle with fibre $X$. The map $E(\xi)\to E(\zeta)$ defined as $[v,x,t]\mapsto [v,t]$ is a bundle map 
that covers the projection $\pi:P(m,X,\sigma)\to \mathbb{R}P^m$ and so $\xi\cong \pi^*(\zeta)$.  

 If $\sigma(x_0)=x_0\in X$, then we have a cross-section $s:\mathbb{R}P^m\to P(m,X)$ 
defined as $[v]\mapsto [v,x_0]$.  Note that $s^*(\xi)=\zeta$.  

\subsection{Dependence of $\hat\omega$ on $\hat\sigma$}\label{dependence}

{\em It should be noted that the definition of $\hat\eta$ depends not only on the real vector bundle $\eta$ but 
also on the bundle map $\hat\sigma$ that covers $\sigma$.}  For example, on the trivial line bundle 
$\epsilon_\mathbb{R}$, if $\hat\sigma(x,t)=(\sigma(x),t)$, then $\hat\epsilon_\mathbb{R}\cong \epsilon_\mathbb{R}$, 
whereas if $\hat\sigma(x,t)=(\sigma(x),-t)$, then $\hat\epsilon_\mathbb{R}$ is isomorphic to $\xi$.  

When $\omega=\tau X$ is the tangent bundle over an almost complex manifold $(X,J)$ and $\hat\sigma=T\sigma$ 
where $\sigma$ is a conjugation on $X$, (i.e., 
satisfies $J_{\sigma(x)}\circ T_x\sigma=-T_{x}\sigma\circ J_x~\forall x\in X$), the vector bundle 
$\hat\tau X$ is understood to be defined with respect to the pair $(\tau X,T\sigma)$.  

Let $k,l\ge 0$ be integers and let $n=k+l\ge 1$ and let $s_1,\ldots, s_n$ be everywhere linearly independent 
sections of the trivial bundle $n\epsilon_\mathbb{R}$.
Denote by $\varepsilon_{k,l}:X\times \mathbb{R}^n\to X\times \mathbb{R}^n$ the involutive bundle map 
$n\epsilon_{\mathbb{R}}$ covering $\sigma$ defined as 
$\varepsilon_{k,l}(x,\sum_jt_js_j(x))=(\sigma(x),-\sum_{1\le j\le k}t_js_j(x)+\sum_{k<j\le n}t_js_j(x))$.  
Then the bundle over $P(m,X,\sigma)$ associated to $(n\epsilon_\mathbb{R},\varepsilon_{k,l})$ is isomorphic to 
$k\xi \oplus l\epsilon_{\mathbb{R}}$. 
When $n=2d, k=l=d$, $n\epsilon_\mathbb{R}=\rho(d\epsilon_\mathbb{C})$ then the standard conjugation on 
$d\epsilon_\mathbb{C}$ equals 
$\varepsilon_{d,d}$ (for an obvious choice of $s_j, 1\le j\le n$). 

Let $(\omega,\hat \sigma)$ be a $\sigma$-conjugate complex vector bundle and let $\eta$ be a real vector bundle 
which is isomorphic to the real vector bundle $\rho(\omega)$ underlying $\omega$.    
Suppose that $f:\rho(\omega)\to \eta$ is a bundle isomorphism that covers the identity map of $X$. 
Set $\tilde\sigma:=f\circ \hat\sigma \circ f^{-1}$.   Then $\wt{\sigma}$ is an involution of $\eta$ that 
covers $\sigma$ and hence defines a vector bundle $\hat{\eta}$ over $P(m,X,\sigma)$.

\begin{lemma}  \label{realisomorphism}  We keep the above notations.
(i) The real vector bundles $\hat\omega$ and 
$\hat\eta$ over $P(m,X,\sigma)$ associated to the pairs $(\omega, \hat \sigma)$ and $(\eta,\tilde \sigma)$ are isomorphic.  
In particular $\hat\omega\cong \hat{\bar\omega}$.\\
(ii) Suppose that $\rho(\omega)=\eta_0\oplus \eta_1$ where $\eta_j, j=0,1$ are real vector bundles.  
Suppose that $\hat\sigma(E(\eta_j))=E(\eta_j)$, then 
$\hat\omega$ is isomorphic to $\hat\eta_0\oplus \hat\eta_1$ where $\hat\eta_j$ is defined with respect to 
the pair $(\eta_j, \hat\sigma|_{E(\eta_j)})$, $j=0,1$.  \\
(iii)  Let $n=k+l\ge 1$.  
Suppose that $\rho(\omega)\oplus n\epsilon_\mathbb{R}\cong N\epsilon_\mathbb{R}$, where $N:=2d+n$, and 
that $\varepsilon_{d+k,d+l}$ on $N\epsilon_{\mathbb{R}}$ restricts to $\hat \sigma$ on $\rho(\omega)$
and to $\varepsilon_{k,l}$ on $n\epsilon_\mathbb{R}$. 
Then 
$\hat \omega\oplus k\xi\oplus l\epsilon_\mathbb{R} \cong (d+k)\xi\oplus (d+l) \epsilon_l$.
\end{lemma}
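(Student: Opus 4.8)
\emph{Plan of proof.} I would prove the three parts in turn; (i) and (ii) are formal properties of the orbit‑bundle construction of \S\ref{assocbundle}, and (iii) is deduced from them together with the identification, recorded in the paragraph preceding the lemma, of the bundle over $P(m,X,\sigma)$ associated to $(n\epsilon_\mathbb{R},\varepsilon_{k,l})$. For (i), the bundle isomorphism $f\colon\rho(\omega)\to\eta$ over $X$ induces the map $\mathbb{S}^m\times E(\rho(\omega))\to\mathbb{S}^m\times E(\eta)$, $(v,e)\mapsto(v,f(e))$. Since $\tilde\sigma=f\circ\hat\sigma\circ f^{-1}$ gives $f\circ\hat\sigma=\tilde\sigma\circ f$, this map intertwines the free $\mathbb{Z}_2$‑involutions $(v,e)\mapsto(-v,\hat\sigma(e))$ and $(v,e')\mapsto(-v,\tilde\sigma(e'))$, hence descends to a vector bundle isomorphism $\hat\omega\cong\hat\eta$ over $P(m,X,\sigma)$. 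Only the underlying real data enter this argument, so in fact: whenever two real bundles over $X$ carry involutive bundle maps covering $\sigma$ that are intertwined by a real bundle isomorphism, the associated orbit bundles over $P(m,X,\sigma)$ are isomorphic. For the last assertion of (i), note that $\rho(\bar\omega)=\rho(\omega)$ as real bundles and that $\hat\sigma$, being conjugate complex linear for $J$, is also conjugate complex linear for the opposite complex structure $-J$ of $\bar\omega$, hence is a $\sigma$‑conjugation of $\bar\omega$; taking $\eta=\rho(\omega)$ and $f=\mathrm{id}$ then gives $\hat\omega\cong\widehat{\bar\omega}$.

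For (ii), the point is that the construction $\eta\mapsto\hat\eta$ commutes with Whitney sums. If $\rho(\omega)=\eta_0\oplus\eta_1$ with $\hat\sigma$ preserving each $E(\eta_j)$, then over $\mathbb{S}^m\times X$ one has the fibrewise decomposition $\mathbb{S}^m\times E(\rho(\omega))=(\mathbb{S}^m\times E(\eta_0))\oplus(\mathbb{S}^m\times E(\eta_1))$, invariant under the diagonal involution $(v,e)\mapsto(-v,\hat\sigma(e))$; since forming the $\mathbb{Z}_2$‑orbit bundle commutes with fibrewise direct sum, one gets $\hat\omega\cong\hat\eta_0\oplus\hat\eta_1$. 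The same computation applies verbatim to any real bundle $E$ over $X$ with an involutive bundle map $\tau$ covering $\sigma$ and a $\tau$‑invariant splitting $E=E_0\oplus E_1$: the associated bundle splits as the sum of the bundles associated to $(E_0,\tau|_{E_0})$ and $(E_1,\tau|_{E_1})$. I will need this slightly more general form in (iii), because there the ambient bundle $N\epsilon_\mathbb{R}$ has rank $N=2d+n$, which need not be even, so it is not literally of the form $\rho(\,\cdot\,)$ for a complex bundle.

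For (iii): by the paragraph preceding the lemma, the orbit bundle associated to $(n\epsilon_\mathbb{R},\varepsilon_{k,l})$ is $k\xi\oplus l\epsilon_\mathbb{R}$, and that associated to $(N\epsilon_\mathbb{R},\varepsilon_{d+k,d+l})$ is $(d+k)\xi\oplus(d+l)\epsilon_\mathbb{R}$ (apply that paragraph with $d+k$, $d+l$, $N=(d+k)+(d+l)$ in place of $k$, $l$, $n$). By hypothesis the given isomorphism $\rho(\omega)\oplus n\epsilon_\mathbb{R}\cong N\epsilon_\mathbb{R}$ carries $\hat\sigma\oplus\varepsilon_{k,l}$ to $\varepsilon_{d+k,d+l}$, so $N\epsilon_\mathbb{R}$ acquires an $\varepsilon_{d+k,d+l}$‑invariant splitting whose two summands, with the restricted involutions, are isomorphic, as real bundles with involutive bundle map covering $\sigma$, to $(\rho(\omega),\hat\sigma)$ and $(n\epsilon_\mathbb{R},\varepsilon_{k,l})$ respectively. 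Applying the splitting statement from (ii), and then the generalised form of (i) to identify the orbit bundles of the two summands with $\hat\omega$ and with $k\xi\oplus l\epsilon_\mathbb{R}$, one obtains
\[
(d+k)\xi\oplus(d+l)\epsilon_\mathbb{R}\ \cong\ \widehat{(N\epsilon_\mathbb{R},\varepsilon_{d+k,d+l})}\ \cong\ \hat\omega\oplus\bigl(k\xi\oplus l\epsilon_\mathbb{R}\bigr),
\]
that is, $\hat\omega\oplus k\xi\oplus l\epsilon_\mathbb{R}\cong(d+k)\xi\oplus(d+l)\epsilon_\mathbb{R}$, as asserted. The only genuinely delicate point in the whole argument is the equivariant bookkeeping behind (i) and (ii) and the observation that these must be phrased for arbitrary real bundles with involution; granting that, (iii) is a purely formal deduction.
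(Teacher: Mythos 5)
Your proposal is correct and follows essentially the same route as the paper: part (i) is proved by exactly the same $\mathbb{Z}_2$-equivariant map $(v,e)\mapsto(v,f(e))$ descending to the orbit bundles (with $\hat\omega\cong\widehat{\bar\omega}$ obtained by noting $\bar\omega$ has the same underlying real bundle and involution), while the paper leaves (ii) and (iii) as "likewise straightforward," which is precisely the Whitney-sum compatibility and the identification of the bundle associated to $(n\epsilon_\mathbb{R},\varepsilon_{k,l})$ with $k\xi\oplus l\epsilon_\mathbb{R}$ that you spell out; your remark that the construction applies to arbitrary real bundles with involution is already built into \S2.1 of the paper.
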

\begin{proof}  We will only prove (i); the proofs of remaining parts are likewise straightforward.  
Consider the map $\phi: \mathbb{S}^m\times E(\omega)\to \mathbb{S}^m\times E(\eta) $ defined as $\phi(v, e)
=(v, f(e))~\forall v\in \mathbb{S}^m,e\in E(\omega)$.  
The $\phi((-v, \sigma (e)))=(-v, f(\hat \sigma(e)))=(-v, \tilde{\sigma}(f(e)))$.  Thus 
$\phi$ is $\mathbb{Z}_2$-equivariant and so induces a vector bundle homomorphism 
$\bar\phi: P(m,E(\omega), \hat\sigma)\to P(m, E(\eta),\tilde\sigma)$ that covers the identity map of 
$P(m,X,\sigma)$.  Restricted to each fibre, the map $\bar\phi$ is an $\mathbb{R}$-linear isomorphism 
since this is true of $f$.  
Therefore $\hat\omega$ and $\hat \eta$ are isomorphic vector bundles.  
Finally, let $\eta=\bar\omega, \tilde\sigma=\hat\sigma$ and $f=id$. Then 
$\hat\omega\cong \hat{\bar{\omega}}$.  
\end{proof}

\begin{example}\label{2sphere}
{\em 
(i)  Consider the Riemann sphere $\mathbb{S}^2=\mathbb{C}P^1$.  Let $\gamma\subset 
2\epsilon_\mathbb{C}$ be the tautological (complex) line bundle over $\mathbb{C}P^1$ and let $\beta$ be 
its orthogonal complement. 
As complex line bundles one has the isomorphism $\beta\cong \bar\gamma$.  
It follows that from the above lemma that $\hat \gamma\cong \hat \beta$.  Also $2\hat\gamma\cong \hat\gamma\oplus \hat\beta
\cong2\hat\epsilon_\mathbb C\cong 2\xi\oplus 2\epsilon_\mathbb R$.\\
(ii)  Suppose that $X=\mathbb{C}G_{n,k}$ and let $\sigma:X\to X$ be the conjugation $L\to \bar L$.   As seen 
in Example \ref{basic}(ii), $v\mapsto \bar v$ define conjugations of $\gamma_{n,k}, \beta_{n,k}$ that cover $\sigma$.  
Note that $\gamma_{n,k}\oplus \beta_{n,k}=n\epsilon_\mathbb{C}$.  By the above lemma we obtain 
that $\hat\gamma_{n,k}\oplus \hat \beta_{n,k}\cong d\hat{\epsilon}_\mathbb{C}\cong 
d\epsilon_\mathbb{R}\oplus d\xi$.   Also, the conjugations on $\gamma_{n,k},\beta_{n,k}$ induce an 
involution, denoted $\hat\sigma$, on $\omega:=\hom(\gamma_{n,k},\beta_{n,k})$; see Example \ref{basic}(iv).  
One has the isomorphism $\tau \mathbb{C}G_{n,k}\cong \omega$ of complex vector bundles (\cite{lam}). Under this isomorphism, the bundle involution $\hat\sigma$ corresponds to $T\sigma:T\mathbb{C}G_{n,k}\to T\mathbb{C}G_{n,k}$.  Therefore  $\hat\omega\cong \hat \tau \mathbb{C}G_{n,k}$. 
}
\end{example}

\subsection{Splitting principle}\label{split}
Denote by $\flag(\mathbb{C}^r)$ the complete flag manifold $\mathbb{C}G(1,\ldots,1)$. 
Let $\omega$ be a complex vector bundle over $X$ of rank $r\ge 1$ endowed with a Hermitian 
metric and let $q:\flag(\omega)\to X$ be 
the $\flag(\mathbb{C}^r)$-bundle associated to $\omega$.  Thus the fibre over an $x\in X$ is the 
space  $\{(L_1,\ldots,L_r)\mid L_1+\cdots+L_r=p_\omega^{-1}(x), L_j\perp L_k, 1\le j<k\le r, \dim_\mathbb{C}L_j=1
\}\cong\flag(\mathbb{C}^r)$ of complete flags in $p_\omega^{-1}(x)\subset E(\omega)$.  The vector bundle $q^*(\omega)$ splits 
as a Whitney sum $q^*(\omega)=\oplus_{1\le j\le r} \omega_j$ of complex line bundles $\omega_j$ over 
$\flag(\omega)$ with projection $p_j:E(\omega_j)\to \flag(\omega)$.   The fibre over a point $\mathbf{L}=(L_1,\ldots, L_r)\in 
\flag(\omega)$ of the bundle $\omega_j$ is the vector space $L_j\subset p_\omega^{-1}(q(\mathbf{L}))$.  

Suppose that $\sigma:X\to X$ is an involution and that $\hat{\sigma}:E(\omega)\to E(\omega)$ is a $\sigma$-conjugation on $\omega$.  
We shall write $\bar{e} $ for $\hat{\sigma}(e) , e\in E(\omega)$.    
One has the involution $\theta: \flag(\omega)\to \flag(\omega)$ 
defined as $\mathbf{L}=(L_1,\ldots, L_r)\mapsto (\bar{L}_1,\ldots,\bar{L}_r)=:\bar{\mathbf{L}}$.
Here $\bar{V}$ denotes the subspace $\hat{\sigma}(V)\subset p^{-1}_\omega(\sigma(x))$ when 
$V\subset p^{-1}_\omega(x).$   Then $\hat\theta:E(q^*(\omega))\to E(q^*(\omega))$ defined as 
$\hat\theta(\mathbf{L}, e)
=(\bar{\mathbf{L}}, \bar e)$ is a $\theta$-conjugation on $q^*(\omega)$. Moreover, it restricts 
to a $\theta$-conjugation $\hat\theta_j$ on the subbundle $\omega_j$ for each $j\le r$.  

Recall from \S\ref{assocbundle} that $\hat\omega$ is the real vector bundle with projection 
$p_{\hat\omega} :P(m,E(\omega),\hat{\omega})\to P(m,X,\sigma)$.  
Likewise, we have the real $2$-plane 
bundle $\hat{\omega}_j$ over $P(m,\flag(\omega),\theta)$ with projection $p_{\hat{\omega}_j}: P(m,E(\omega_j), \hat{\theta}_j)
\to P(m,\flag(\omega), \theta)$.     
Since $q\circ \theta=\sigma\circ q$, we 
have the induced map $\hat{q}: P(m,\flag(\omega),\theta)
\to P(m,X,\sigma)$ defined as $[v, \mathbf{L}]\mapsto  [v, q(\mathbf{L})]$.   The map $\hat{q}$ is in fact 
the projection of a fibre bundle with fibre the flag manifold $\flag(\mathbb{C}^r)$.    Since $\hat\theta=(\hat\theta_1,\ldots,\hat\theta_r)$, 
applying Lemma 
\ref{realisomorphism} (ii) 
we see that $\hat{q}^*(\hat\omega)\cong \oplus_{1\le j\le r}\hat\omega_j$. 

Recall that the first Chern classes mod $2$ of the canonical 
complex line bundles $\xi_j$ over $\flag(\mathbb{C}^r)$, $1\le j\le r$, generate the $\mathbb{Z}_2$-cohomology algebra 
$H^*(\flag(\mathbb{C}^r);\mathbb{Z}_2)$.   In fact $H^*(\flag(\mathbb{C}^r);\mathbb{Z})\cong  \mathbb{Z}[c_1,
\ldots, c_r]/I$ where $I$ is the ideal generated by the elementary symmetric polynomials in $c_1,\ldots,c_r$.  
Here the generators $ c_j+I$  may be identified with the (integral) Chern class $c_1(\xi_j)$.  
In particular 
$H^*(\flag(\mathbb{C}^r);\mathbb{Z})^{S_r}=H^0(\flag(\mathbb{C}^r);\mathbb{Z})\cong \mathbb{Z}$.  The last assertion 
is not valid for mod $2$-cohomology.  Indeed, the top dimensional $\mod 2$-cohomology group, being isomorphic to $ 
\mathbb Z_2$, is also fixed by $S_r$.

Since $\hat{\omega}_j$ restricts to the (real) $2$-plane bundle 
$\rho(\xi_j)$, we have $c_1(\xi_j)=i^*(w_2(\omega_j))$ where $i:\flag(\mathbb{C}^r)\cong \hat q^{-1}([v,x])\to P(m,\flag(\omega),\theta)$ is fibre 
inclusion, we see that the  
$\flag(\mathbb{C}^r)$-bundle $(P(m, \flag(\omega), \theta), P(m,X,\sigma), \hat{q})$ admits a $\mathbb{Z}_2$-cohomology 
extension of the fibre. 
By Leray-Hirsch theorem \cite[\S7, Ch.V]{spanier}, we have $H^*(P(m,\flag(\omega),\theta);\mathbb{Z}_2)\cong H^*(P(m,X,\sigma);\mathbb{Z}_2)
\otimes H^*(\flag(\mathbb{C}^r);\mathbb{Z}_2)$.  Thus $H^*(P(m,\flag(\omega),\theta);\mathbb{Z}_2)$ is a free module over the algebra $H^*(P(m,X,\sigma);\mathbb{Z}_2)$ of rank $\dim_{\mathbb{Z}_2} H^*(\flag(\mathbb{C}^r);\mathbb{Z}_2)=r!$.  In particular, it follows that 
$\hat{q}$ induces a monomorphism in mod $2$ cohomology.    

The symmetric group $S_r$ operates on $\flag(\omega)$ by permuting the components of each 
flag $\mathbf{L}=(L_1,\ldots,L_r)$ and the projection $q: \flag(\omega)\to X$  is constant on 
the $S_r$-orbits. Moreover,  $\theta\circ \lambda=\lambda\circ \theta$ for each $\lambda\in S_r$.  This implies that 
the $S_r$ action on $\flag(\omega)$ extends to an action on $P(m,\flag(\omega), \theta)$ where $\lambda([v,\mathbf{L}])
=[v, \lambda(\mathbf{L})]$.  The projection $\hat{q}: P(m,\flag(\omega), \theta)\to P(m,X,\sigma)$ 
is constant on $S_r$-orbits.  It follows that the image of the ring homomorphism 
$\hat{q}^*:H^*(P(m,X,\sigma);\mathbb{Z}_2)\to H^*(P(m,\flag(\omega),\theta);\mathbb{Z}_2)$ is contained in the 
subring $H^*(P(m,\flag(\omega),\theta);\mathbb{Z}_2)^{S_r}$ of elements fixed by the induced 
action of $S_r$ on $H^*(P(m,\flag(\omega),\theta);\mathbb{Z}_2)$.   As the $S_r$-action induces the identity map 
of $P(m, X,\sigma)$ 
we see that it acts as $H^*(P(m,X,\sigma);\mathbb{Z}_2)$-module automorphisms on 
$H^*(P(m,\flag(\omega),\theta);\mathbb{Z}_2)$.   
Hence $Im(\hat q^*)$ is contained in the subalgebra of $H^*(P(m,\flag(\omega),\theta),\mathbb Z_2)$ invariant under the 
action of $S_r$.

We summarise the above discussion in the proposition below.  

\begin{proposition} \label{splitting} {\em (Splitting principle) }
Let $\omega$ be a $\sigma$-conjugate complex vector bundle of rank $r$ and let $q:\flag(\omega)\to X$ be 
the associated $\flag(\mathbb{C}^r)$-bundle over $X$.  Then, with the above notations, \\(i) 
 the $\omega_j$ are $\theta$-conjugate line bundles for $1\le j\le r$, and, 
$ \hat{q}^*(\hat\omega)=\oplus_{1\le j\le r}\hat\omega_j.$\\
(ii)  $\hat{q}:P(m,\flag(\omega),\theta)\to P(m,X,\sigma)$ 
induces a monomorphism in cohomology, moreover, $H^*(P(m,\flag(\omega),\theta);\mathbb{Z}_2)$ is isomorphic, as an $H^*(P(m,X,\sigma);\mathbb{Z}_2)$-module, to a free module with basis a $\mathbb{Z}_2$-basis 
of $H^*(\flag(\mathbb{C}^r);\mathbb{Z}_2)$. \\
 (iii)  The image of $\hat{q}^*$ is contained in the subalgebra 
invariant under the action of the symmetric group 
$S_r$ on $H^*(P(m,\flag(\omega),\theta);\mathbb{Z}_2)$. \hfill $\Box$
\end{proposition}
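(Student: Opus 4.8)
The plan is to assemble the three assertions from standard properties of flag bundles together with Lemma~\ref{realisomorphism} and the Leray--Hirsch theorem; indeed most of the argument has already been carried out in the paragraphs preceding the statement, and the proof consists in organising it. For part~(i), I would begin from the classical splitting $q^*(\omega)=\bigoplus_{1\le j\le r}\omega_j$ into complex line bundles over $\flag(\omega)$, the fibre of $\omega_j$ over $\mathbf{L}=(L_1,\dots,L_r)$ being the line $L_j$. Since $\hat\sigma$ carries $L_j\subset p_\omega^{-1}(x)$ to $\hat\sigma(L_j)\subset p_\omega^{-1}(\sigma(x))$, the induced involution $\theta$ of $\flag(\omega)$ and the induced $\theta$-conjugation $\hat\theta$ of $q^*(\omega)$ preserve each summand $\omega_j$, restricting there to a $\theta$-conjugation $\hat\theta_j$; hence each $\omega_j$ is a $\theta$-conjugate line bundle. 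Applying Lemma~\ref{realisomorphism}(ii) to the decomposition $\rho(q^*\omega)=\bigoplus_j\rho(\omega_j)$, which $\hat\theta$ preserves, gives $\widehat{q^*\omega}\cong\bigoplus_j\hat\omega_j$. It then remains to identify $\widehat{q^*\omega}$ with $\hat q^*(\hat\omega)$; this follows directly from the total-space descriptions, since the map $\mathbb{S}^m\times E(q^*\omega)\to\mathbb{S}^m\times E(\omega)$, $(v,(\mathbf{L},e))\mapsto(v,e)$, is $\mathbb{Z}_2$-equivariant and linear on fibres, hence descends to a bundle isomorphism of $\widehat{q^*\omega}$ onto the pullback $\hat q^*(\hat\omega)$. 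Combining, $\hat q^*(\hat\omega)=\bigoplus_j\hat\omega_j$.

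For part~(ii), I would exhibit a $\mathbb{Z}_2$-cohomology extension of the fibre for the $\flag(\mathbb{C}^r)$-bundle $\hat q\colon P(m,\flag(\omega),\theta)\to P(m,X,\sigma)$. Over a fibre $i\colon\flag(\mathbb{C}^r)\hookrightarrow P(m,\flag(\omega),\theta)$ the $2$-plane bundle $\hat\omega_j$ restricts to $\rho(\xi_j)$, so $i^*w_2(\hat\omega_j)=w_2(\rho(\xi_j))=c_1(\xi_j)\bmod 2$. As the classes $c_1(\xi_j)\bmod 2$ generate $H^*(\flag(\mathbb{C}^r);\mathbb{Z}_2)$, the monomials in the $w_2(\hat\omega_j)\in H^2(P(m,\flag(\omega),\theta);\mathbb{Z}_2)$ corresponding to a monomial $\mathbb{Z}_2$-basis of $H^*(\flag(\mathbb{C}^r);\mathbb{Z}_2)$ restrict to that basis, which supplies the desired extension. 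Leray--Hirsch \cite[Ch.~V, \S7]{spanier} now shows that $H^*(P(m,\flag(\omega),\theta);\mathbb{Z}_2)$ is a free module over $H^*(P(m,X,\sigma);\mathbb{Z}_2)$, with module structure via $\hat q^*$, on such a basis, of rank $\dim_{\mathbb{Z}_2}H^*(\flag(\mathbb{C}^r);\mathbb{Z}_2)=r!$; since $1$ belongs to the basis, $\hat q^*$ is split injective, in particular a monomorphism.

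For part~(iii), the group $S_r$ acts on $\flag(\omega)$ by permuting the components of a flag, and this commutes with $\theta$ because $\theta$ conjugates each component; hence $\lambda([v,\mathbf{L}])=[v,\lambda(\mathbf{L})]$ defines an $S_r$-action on $P(m,\flag(\omega),\theta)$ covering the identity of $P(m,X,\sigma)$, so $\hat q\circ\lambda=\hat q$ for every $\lambda\in S_r$. Consequently $\lambda^*\circ\hat q^*=\hat q^*$, and the image of $\hat q^*$ is contained in $H^*(P(m,\flag(\omega),\theta);\mathbb{Z}_2)^{S_r}$. The only mildly delicate points are the naturality isomorphism $\widehat{q^*\omega}\cong\hat q^*(\hat\omega)$ and the identification $\hat\omega_j|_{\mathrm{fibre}}\cong\rho(\xi_j)$; both are routine unwindings of the definitions of the various double covers, so I do not expect a genuine obstacle — the substantive content is already encapsulated in Lemma~\ref{realisomorphism}(ii) and the Leray--Hirsch theorem.
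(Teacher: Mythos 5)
Your proposal is correct and follows essentially the same route as the paper, whose proof is exactly the discussion preceding the proposition: the induced conjugations $\hat\theta_j$ on the $\omega_j$ plus Lemma~\ref{realisomorphism}(ii) for part~(i), the cohomology extension given by the classes $w_2(\hat\omega_j)$ restricting to $c_1(\xi_j)\bmod 2$ together with Leray--Hirsch for part~(ii), and the $S_r$-action commuting with $\theta$ and covering the identity of $P(m,X,\sigma)$ for part~(iii). The only difference is that you spell out the naturality identification $\widehat{q^*\omega}\cong\hat q^*(\hat\omega)$, which the paper leaves implicit.
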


We end this section with the following lemma which will be used in the sequel.

\begin{lemma} 
We keep the above notations.  Let $\omega$ be a $\sigma$-conjugate complex vector bundle 
over $X$.  Suppose that $\fix(\sigma)\ne \emptyset$ and that $H^1(X;\mathbb{Z}_2)=0$.   Then 
$\fix(\theta)\ne \emptyset$ and 
$ H^1(P(m,\flag(\omega),\theta);\mathbb{Z}_2)\cong H^1(P(m,X,\sigma);\mathbb{Z}_2)\cong H^1(\mathbb{R}P^m;\mathbb{Z}_2)\cong\mathbb{Z}_2$.
\end{lemma}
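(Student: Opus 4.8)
The plan is to prove the two assertions separately. The existence of a $\theta$-fixed point is fibrewise linear algebra over a $\sigma$-fixed point of $X$, while the chain of cohomology isomorphisms follows by feeding the hypothesis $H^1(X;\mathbb{Z}_2)=0$ into the Leray--Hirsch isomorphism of Proposition \ref{splitting}(ii) and then into the Serre spectral sequence of $\pi\colon P(m,X,\sigma)\to\mathbb{R}P^m$ from \S\ref{assocbundle}. Throughout we take $m\ge1$.

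For $\fix(\theta)\ne\emptyset$: note first that, for $\theta$ to be defined on $\flag(\omega)$ as in \S\ref{split}, the auxiliary Hermitian metric $h$ on $\omega$ must be $\hat\sigma$-compatible, i.e.\ $h(\hat\sigma u,\hat\sigma v)=\overline{h(u,v)}$ --- and such metrics exist, since if $h_0$ is any Hermitian metric then $\tfrac12\bigl(h_0+h_0'\bigr)$, with $h_0'(u,v):=\overline{h_0(\hat\sigma u,\hat\sigma v)}$, is again Hermitian and $\hat\sigma$-compatible. Now fix $x_0\in\fix(\sigma)$ and put $W:=p_\omega^{-1}(x_0)$; then $J:=\hat\sigma|_W$ is a conjugate-$\mathbb{C}$-linear involution of $W$, its real form $W^{J}=\{w\in W:Jw=w\}$ satisfies $W=W^{J}\oplus iW^{J}$ with $\dim_{\mathbb R}W^{J}=r$, and $\hat\sigma$-compatibility makes $h$ real --- hence a genuine inner product --- on $W^{J}$. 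Choosing an $h$-orthonormal $\mathbb{R}$-basis $v_1,\dots,v_r$ of $W^{J}$, which is simultaneously a Hermitian-orthonormal $\mathbb{C}$-basis of $W$, the tuple $\mathbf{L}_0:=(\mathbb{C}v_1,\dots,\mathbb{C}v_r)$ lies in $\flag(\omega)$, and $Jv_j=v_j$ forces $\hat\sigma(\mathbb{C}v_j)=\mathbb{C}v_j$, so $\theta(\mathbf{L}_0)=\mathbf{L}_0$; hence $\fix(\theta)\ne\emptyset$. I expect the one point needing care to be precisely this role of $\hat\sigma$-compatibility: for a metric not compatible with $\hat\sigma$ there need be no $J$-stable orthogonal line decomposition of $W$, and indeed $\theta$ itself would not be defined.

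Next I would reduce the cohomology statement to the computation of $H^1(P(m,X,\sigma);\mathbb{Z}_2)$. Since $H^*(\flag(\mathbb{C}^r);\mathbb{Z})\cong\mathbb{Z}[c_1,\dots,c_r]/I$ is concentrated in even degrees (as recalled in \S\ref{split}), $H^1(\flag(\mathbb{C}^r);\mathbb{Z})=0$ and $H^2(\flag(\mathbb{C}^r);\mathbb{Z})$ is torsion-free, whence $H^0(\flag(\mathbb{C}^r);\mathbb{Z}_2)=\mathbb{Z}_2$ and $H^1(\flag(\mathbb{C}^r);\mathbb{Z}_2)=0$. By Proposition \ref{splitting}(ii), $H^*(P(m,\flag(\omega),\theta);\mathbb{Z}_2)$ is a free graded $H^*(P(m,X,\sigma);\mathbb{Z}_2)$-module on a $\mathbb{Z}_2$-basis of $H^*(\flag(\mathbb{C}^r);\mathbb{Z}_2)$; taking degree-$1$ parts and using $H^1(\flag(\mathbb{C}^r);\mathbb{Z}_2)=0$ shows that $\hat q^*\colon H^1(P(m,X,\sigma);\mathbb{Z}_2)\to H^1(P(m,\flag(\omega),\theta);\mathbb{Z}_2)$ is an isomorphism.

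Finally I would compute $H^1(P(m,X,\sigma);\mathbb{Z}_2)$ using the (first-quadrant) Serre spectral sequence of $\pi\colon P(m,X,\sigma)\to\mathbb{R}P^m$ with fibre $X$ and $\mathbb{Z}_2$ coefficients, $E_2^{p,q}=H^p(\mathbb{R}P^m;\mathcal H^q(X;\mathbb{Z}_2))$. As $X$ is connected, $\mathcal H^0(X;\mathbb{Z}_2)$ is the constant system $\mathbb{Z}_2$, so $E_2^{p,0}=H^p(\mathbb{R}P^m;\mathbb{Z}_2)$; and $H^1(X;\mathbb{Z}_2)=0$ makes the whole row $E_2^{p,1}$ vanish, whatever the monodromy. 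Hence the only $E_2$-term in total degree $1$ that can be nonzero is $E_2^{1,0}=H^1(\mathbb{R}P^m;\mathbb{Z}_2)\cong\mathbb{Z}_2$, and no nonzero differential can enter it (sources would lie in negative columns) or leave it (targets would lie in negative rows); therefore $H^1(P(m,X,\sigma);\mathbb{Z}_2)\cong E_\infty^{1,0}=E_2^{1,0}\cong\mathbb{Z}_2$, with the isomorphism induced by $\pi^*$. Composing this with the isomorphism of the previous paragraph yields $H^1(P(m,\flag(\omega),\theta);\mathbb{Z}_2)\cong H^1(P(m,X,\sigma);\mathbb{Z}_2)\cong H^1(\mathbb{R}P^m;\mathbb{Z}_2)\cong\mathbb{Z}_2$, as claimed.
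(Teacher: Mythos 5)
Your proof is correct and follows essentially the same route as the paper: the fixed flag is produced by complexifying a real orthogonal flag in the real form of the fibre over a $\sigma$-fixed point, and $H^1$ is obtained from the Serre spectral sequence of $\pi$ together with $H^1(\flag(\mathbb{C}^r);\mathbb{Z}_2)=0$ (where the paper runs further Serre spectral sequences for $q$ and $\hat q$, you invoke the Leray--Hirsch isomorphism of Proposition \ref{splitting}(ii), which amounts to the same thing). Your explicit check that the Hermitian metric may be averaged to be $\hat\sigma$-compatible is a worthwhile addition: it is needed both for $\theta$ to preserve the orthogonality condition defining $\flag(\omega)$ and for the complexified lines $K_j\otimes_{\mathbb{R}}\mathbb{C}$ to be mutually orthogonal, and the paper's proof uses this implicitly when it asserts that the Hermitian product restricts to a real inner product on $U=\fix(\hat\sigma_x)$.
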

\begin{proof} 
Let $\sigma(x)=x\in X$ and set $V:=p_\omega^{-1}(x)$.  Then $\hat{\sigma}$ restricts to a conjugate complex 
isomorphism $\hat{\sigma}_x$ of $V$ onto itself.  Thus $V\cong \bar{V}$.   Then, setting $\fix(\hat\sigma_x)=:U
\subset V$, we see that $V$ is the $\mathbb{C}$-linear extension of $U$, that is,  
$V=U\otimes_\mathbb{R} \mathbb{C}$.  The Hermitian product on $V$ restricts to a (real) 
inner product on $U$.   
Let $(K_1,\ldots,K_r)$ be a complete real flag in $U$ and define $L_j:= K_j\otimes_\mathbb{R}\mathbb{C}\subset V$.
Then it is readily seen that $\mathbf{L}=(L_1,\ldots, L_r)$ belongs to $\flag(\omega)$ and is fixed by $\theta$.  

Since $H^1(X;\mathbb{Z}_2)=0$, we have $H^1(P(m,X,\sigma);\mathbb{Z}_2)\cong H^1(\mathbb{R}P^m;\mathbb{Z}_2)\cong \mathbb{Z}_2$, using the Serre spectral sequence of the $X$-bundle with projection $\pi:P(m,X,\sigma)\to \mathbb{R}P^m$.    The same argument applied to the 
$\flag(\mathbb{C}^r)$-bundle with projection $q:\flag(\omega)\to X$   
yields that $H^1(\flag(\omega);\mathbb{Z}_2)\cong H^1(X;\mathbb{Z}_2)=0$.  Now 
using the $\flag(\omega)$-bundle with projection $\hat{q}: P(m,\flag(\omega),\theta)\to P(m,X,\sigma)$, 
we obtain that 
 $H^1(P(m,\flag(\omega),\theta); \mathbb{Z}_2)\cong 
H^1(P(m,X,\sigma);\mathbb{Z}_2)\cong \mathbb{Z}_2$.   
\end{proof}

We shall identify $H^1(P(m,\flag(\omega),\theta);\mathbb{Z}_2), 
H^1(P(m,X,\sigma);\mathbb{Z}_2), H^1(\mathbb{R}P^m;\mathbb{Z}_2)$ and denote the generator of any one of them by $x$. \footnote{This should however cause no confusion with the notation for a typical point of  $X$.}


\subsection{A formula for Stiefel-Whitney classes of $\hat{\omega}$}
 Denote the Stiefel-Whitney polynomial $\sum_{0\le i\le q}w_i(\eta)t^i$ of a rank $q$ real vector bundle 
 $\eta$ by 
 $w(\eta;t)$ and similarly the Chern polynomial $\sum_{0\le i\le q}c_j(\alpha) t^j$ of a complex vector bundle $\alpha$ of rank 
 $q$ by  $c(\alpha;t)$.  Recall that when $\alpha$ is regarded as a real vector bundle, we have $w(\alpha; t)
 =c(\alpha;t^2) \mod 2$.  (See \cite{ms}.)  
 
 {\it We shall make no notational distinction between $c_j(\alpha)\in H^{2j}(X;\mathbb{Z})$ and its 
 reduction mod $2$ in $H^{2j}(X;\mathbb{Z}_2)$.}   In fact, we will mostly be working with 
 $\mathbb{Z}_2$-coefficients.

Since $\hat{\omega}$ restricted to any fibre of $\pi:P(m,X,\sigma)\to \mathbb{R}P^m$ is 
isomorphic to $\omega$  (regarded as a real vector bundle), we obtain that, the total Stiefel-Whitney 
polynomial 
$j^*(w(\hat{\omega};t))=w(\omega;t)=c(\omega,t^2)$ where $j: X\to P(m,X,\sigma)$ is the 
fibre inclusion.  

The following proposition yields the Stiefel-Whitney classes of $\hat{\omega}$ when $\omega$ is a complex 
line bundle.  Using this and the splitting principle, we will obtain a formula for the Stiefel-Whitney classes 
when $\omega$ is of arbitrary rank.  The proposition was obtained in the special case of Dold manifolds in  
\cite[Prop. 1.4]{ucci}.   Recall that $\xi$ is the line bundle associated to the double cover 
$\mathbb{S}^m\times X
\to P(m,X,\sigma)$ and is isomorphic to $\pi^*(\zeta)$.

\begin{lemma}  \label{hatomega}
Let $\sigma:X\to X$ be an involution with non-empty fixed point set and 
let $\omega$ be a complex vector bundle of rank $r$ over $X$. 
With the above notations, we have 
 $\hat{\omega}\cong \xi\otimes \hat{\omega}$.
\end{lemma}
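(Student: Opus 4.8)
The plan is to use the standard dictionary between real vector bundles over $P(m,X,\sigma)=(\mathbb{S}^m\times X)/\mathbb{Z}_2$ and $\mathbb{Z}_2$-equivariant real vector bundles over $\mathbb{S}^m\times X$, where $\mathbb{Z}_2$ acts on the base by the deck transformation $(v,x)\mapsto(-v,\sigma(x))$; a bundle is recovered as the orbit space, and an equivariant isomorphism descends to an isomorphism of the quotients. Under this dictionary, $\hat\omega$ corresponds to the pull-back $\mathrm{pr}_X^*\rho(\omega)$ of the underlying real bundle of $\omega$ along the projection $\mathrm{pr}_X\colon\mathbb{S}^m\times X\to X$, equipped with the equivariant structure $(v,e)\mapsto(-v,\hat\sigma(e))$; the Hopf bundle $\xi\cong\pi^*\zeta$ corresponds to the trivial real line bundle, with equivariant structure $(v,t)\mapsto(-v,-t)$, since its pull-back to $\mathbb{S}^m\times X$ is canonically trivial (it is pulled back from $\mathbb{S}^m$); hence $\xi\otimes\hat\omega$ corresponds to $\mathrm{pr}_X^*\rho(\omega)$ with the equivariant structure $(v,e)\mapsto(-v,-\hat\sigma(e))$, the sign being negation in the fibre. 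So it suffices to exhibit a $\mathbb{Z}_2$-equivariant bundle automorphism of $\mathrm{pr}_X^*\rho(\omega)$ over $\mathbb{S}^m\times X$ intertwining the two equivariant structures $(v,e)\mapsto(-v,\hat\sigma(e))$ and $(v,e)\mapsto(-v,-\hat\sigma(e))$.

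The point is that the fibrewise complex structure $J$ of $\omega$ supplies such an automorphism. First I would set $\Psi(v,e):=(v,Je)$, an $\mathbb{R}$-linear bundle automorphism of $\mathrm{pr}_X^*\rho(\omega)$ covering the identity of $\mathbb{S}^m\times X$. Since $\hat\sigma$ is conjugate complex linear on each fibre, it anticommutes with $J$, i.e. $\hat\sigma\circ J=-J\circ\hat\sigma$; feeding this into the equivariance condition shows precisely that $\Psi$ carries the structure $(v,e)\mapsto(-v,-\hat\sigma(e))$ to the structure $(v,e)\mapsto(-v,\hat\sigma(e))$, because the sign produced by conjugate-linearity of $\hat\sigma$ cancels the sign introduced by the $\xi$-factor. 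Passing to orbit spaces then yields the asserted isomorphism $\xi\otimes\hat\omega\cong\hat\omega$.

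The only delicate point is the bookkeeping of signs: one must identify the equivariant bundle representing $\xi\otimes\hat\omega$ accurately enough to see exactly one extra sign appear, and then track the signs correctly through the equivariance check. This is entirely routine — there is no genuine obstacle, and the whole content of the lemma is the anticommutation $\hat\sigma\circ J=-J\circ\hat\sigma$. If one prefers to avoid the language of equivariant bundles, the same argument can be phrased as a direct map on total spaces: send the class of $(v,e)$ in $E(\hat\omega)=\mathbb{S}^m\times E(\omega)/\!\sim$ to the class of $(v,\,1\otimes Je)$ in $E(\xi\otimes\hat\omega)$, using $\hat\sigma(Je)=-J\hat\sigma(e)$ together with $(-1)\otimes(-J\hat\sigma(e))=1\otimes J\hat\sigma(e)$ in the fibre to check well-definedness, and observing that it is a fibrewise $\mathbb{R}$-linear isomorphism. (The hypothesis $\fix(\sigma)\ne\emptyset$ plays no role in this particular statement.)
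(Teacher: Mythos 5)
Your proof is correct and is essentially the paper's own argument: the paper defines $h\colon E(\xi\otimes\hat\omega)\to E(\hat\omega)$ by $[v,x;t\otimes e]\mapsto[v,x;\sqrt{-1}\,te]$ and checks well-definedness using $\hat\sigma(\sqrt{-1}\,te)=-\sqrt{-1}\,t\hat\sigma(e)$, which is precisely your fibrewise multiplication by $J$ intertwining the two equivariant structures via the anticommutation $\hat\sigma\circ J=-J\circ\hat\sigma$. Your equivariant-bundle phrasing and your closing total-space map are the same construction, and your observation that $\fix(\sigma)\ne\emptyset$ is not needed is also accurate.
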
 
\begin{proof}  
 The total space of the bundle $\xi\otimes \hat{\omega}$ has the description $E(\xi\otimes\hat{\omega})
=\{[v,x; t\otimes e]\mid [v,x]\in P(m,X;\sigma),t\in \mathbb{R}, e\in p_\omega^{-1}(x)\}$ where $[v,x;t\otimes e]
=\{(v,x;t\otimes e), (-v,\sigma(x); -t\otimes \hat{\sigma}(e))\}$; here $\hat{\sigma}:E(\omega)\to E(\omega)$ is an {\it involutive}  
bundle map that covers $\sigma$ and is conjugate linear isomorphism on each fibre.  Thus 
we have the equality $\hat{\sigma}(\sqrt{-1}te)=-\sqrt{-1}t\hat{\sigma}(e)$.  
Observe that  $ [v,x; \sqrt{-1}te]=[-v,\sigma(x); \hat{\sigma}(\sqrt{-1}te)]=[-v,\sigma(x), -\sqrt{-1}t\hat{\sigma}(e)]$ 
and so the map $h:E(\xi\otimes \hat{\omega})\to E(\hat{\omega})$,  
$[v,x;t\otimes e]\mapsto [v,x; \sqrt{-1}te]=[-v,\sigma(x); -\sqrt{-1}t\hat{\sigma}(e)]$ is a well-defined isomorphism of real vector bundles.   
\end{proof}

\subsection*{Simplifying assumptions} \label{assumptions}  We shall make the following simplifying assumptions.\\
(a) $\sigma:X\to X$ has a fixed point.  As observed already, the $X$-bundle $\pi:P(m,X,\sigma)\to 
\mathbb{R}P^m$ admits a cross-section $s:\mathbb{R}P^m\to P(m,X,\sigma)$.    
It follows that  $\pi^*: H^*(\mathbb{R}P^m;\mathbb{Z}_2)
\to H^*(P(m,X,\sigma);\mathbb{Z}_2)$ is a monomorphism.  We shall 
identify $H^*(\mathbb{R}P^m;\mathbb{Z}_2)$ with its image under $\pi^*$. \\
(b) $H^1(X;\mathbb{Z}_2)=0$.  This implies that $H^2(X;\mathbb{Z})\to H^2(X;\mathbb{Z}_2)$ induced by the 
homomorphism $\mathbb{Z}\to \mathbb{Z}_2$ of the coefficient rings is surjective.  \\

\begin{example}\label{flag-assumptions}{\em 
(i)  Let $X$ be the complex flag manifold $\mathbb{C}G(n_1,\ldots,n_r)$ and let 
$\sigma:X\to X$ be defined by the complex conjugation on $\mathbb{C}^n$, $n=\sum n_j$.  Then $\fix(\sigma)$ is the {\it real} flag manifold $\mathbb{R}G(n_1,\ldots, n_r)=O(n)/(O(n_1)\times\cdots\times O(n_r))$ so assumption (a) holds.  Since $X$ is simply connected, (b) also holds. 

(ii)  Let $\omega$ be a $\sigma$-conjugate complex vector bundle of rank $r$.
Suppose that $\fix(\sigma)\ne \emptyset$ and that $H^1(X;\mathbb{Z}_2)=0$.   
Let $\theta:\flag(\omega)\to \flag(\omega)$ 
be the associated involution of the $\flag(\mathbb{C}^r)$-manifold bundle over $X$. (See \S\ref{split}.)  
Then $\fix(\theta)\ne \emptyset$ and 
$H^1(\flag(\omega);\mathbb{Z}_2)=0$.  
}
\end{example}

In the Serre spectral sequence of the bundle $(P(m,X), \mathbb{R}P^m,X,\pi)$, 
we have $E_2^{0,k}=H^0(\mathbb{R}P^m;\mathcal{H}^k(X;\mathbb{Z}_2))$ where $\mathcal{H}^k(X;\mathbb{Z}_2)$ denotes the 
local coefficient system on $\mathbb{R}P^m$.  
The action of the fundamental group of $\mathbb{R}P^m$ on $H^*(X;\mathbb{Z}_2)$ 
is generated by the involution  $\sigma^*: H^*(X;\mathbb{Z}_2)\to H^*(X;\mathbb{Z}_2)$.  
Hence $E^{0,2}_2=H^2(X;\mathbb{Z}_2)^{\mathbb{Z}_2}=\fix(\sigma^*)$.   In order to emphasise the dimension, we shall write $H^2(\sigma;\mathbb{Z}_2)$ instead of 
$\sigma^*$.  
Also (b) implies that $E_3^{0,2}=E^{0,2}_2$ and (a) implies that the transgression $E_3^{0,2}=\fix(H^2(\sigma;\mathbb{Z}_2))\to E_3^{3,0}
=H^3(\mathbb{R}P^3;\mathbb{Z}_2)$ is zero.  It follows that $E_3^{0,2}=E_\infty^{0,2}$ and that the image 
$j^*:H^2(P(m,X);\mathbb{Z}_2) \to H^2(X;\mathbb{Z}_2)$ equals $\fix(H^2(\sigma;\mathbb{Z}_2))$, where $j:X\hookrightarrow P(m,X)$ is the fibre inclusion.   
We have the exact sequence:
\[0\to H^2(\mathbb{R}P^m;\mathbb{Z}_2)\stackrel{\pi^*}{\to} H^2(P(m,X,\sigma);\mathbb{Z}_2)\stackrel{j^*}{\to} \fix(H^2(\sigma;\mathbb{Z}_2))\to 0. 
\eqno(1)\]

The homomorphism $s^*:H^2(P(m,X,\sigma);\mathbb{Z}_2)\to H^2(\mathbb{R}P^m;\mathbb{Z}_2)$ yields a splitting and allows us to identify $\fix(H^2(\sigma;\mathbb{Z}_2))$ as a {\em subspace} 
of $H^2(P(m,X,\sigma);\mathbb{Z}_2)$, namely the kernel of $s^*$.  {\it We shall denote the image of an element 
$u\in \fix(H^2(\sigma;\mathbb{Z}_2))$ by $\tilde{u}$.}

\begin{lemma}  \label{linebundle} 
Suppose that $\sigma(x_0)=x_0$ and $H^1(X;\mathbb{Z}_2)=0$.   
Let $s:\mathbb RP^m\to P(m,X,\sigma)$ be defined as $v\mapsto [v,x_0]$ and let  
$\omega$ be a $\sigma$-conjugate complex vector bundle over $X$ of rank $r$.  Then 
(i) $s^*(\hat\omega)\cong r\epsilon_\mathbb{R}\oplus r\zeta$, (ii) $c_k(\omega)\in \fix(H^{2k}(\sigma;\mathbb{Z}_2)), ~k\le r$, and, (iii) if $r=1$, then $w(\hat\omega)=1+x+\tilde{c}_1(\omega).$ 
\end{lemma}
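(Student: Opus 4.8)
The plan is to verify the three parts essentially in the order stated, using the cross-section $s$ and the fibre inclusion $j$ as the two probes into $H^*(P(m,X,\sigma);\mathbb{Z}_2)$.

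For (i), I would compute the pullback bundle $s^*(\hat\omega)$ directly from the construction in \S\ref{assocbundle}. Since $s([v])=[v,x_0]$ and $\sigma(x_0)=x_0$, the total space $p_{\hat\omega}^{-1}(s(\mathbb{R}P^m))$ consists of classes $[v,x_0,e]$ with $e\in V:=p_\omega^{-1}(x_0)$, identified via $(v,x_0,e)\sim(-v,x_0,\hat\sigma_{x_0}(e))$, where $\hat\sigma_{x_0}$ is a conjugate-linear involution of $V$. As in the proof of the preceding lemma on $\fix(\theta)$, write $V=U\otimes_{\mathbb R}\mathbb C$ with $U=\fix(\hat\sigma_{x_0})$, so that $\rho(\omega)|_{x_0}=U\oplus iU$ as a real vector space with $\hat\sigma_{x_0}$ acting as $+1$ on $U$ and $-1$ on $iU$. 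Choosing a real basis of $U$ gives everywhere-independent sections, and the discussion of $\varepsilon_{k,l}$ in \S\ref{dependence} (with $k=l=r$) identifies the associated bundle over $\mathbb{R}P^m$ as $r\zeta\oplus r\epsilon_{\mathbb R}$. Thus $s^*(\hat\omega)\cong r\epsilon_{\mathbb R}\oplus r\zeta$.

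For (ii), recall from the spectral-sequence discussion preceding the lemma that $j^*\colon H^{2k}(P(m,X);\mathbb{Z}_2)\to H^{2k}(X;\mathbb{Z}_2)$ has image contained in $\fix(H^{2k}(\sigma;\mathbb{Z}_2))$; here I would note that the same argument used for degree $2$ (the transgression vanishes by assumption (a), and $E_3^{0,2k}=E_2^{0,2k}$ needs only $H^1(X;\mathbb{Z}_2)=0$ for the relevant $d_2$ to vanish — more carefully, one uses that all differentials out of $E^{0,2k}$ land in groups that are killed by the cross-section) gives $\im(j^*)\subseteq\fix(H^{2k}(\sigma;\mathbb{Z}_2))$ in all even degrees, or alternatively one simply observes $\sigma^*\circ j^* $-type naturality: since the deck transformation of $\mathbb{S}^m\times X$ restricts to $\sigma$ on a fibre and $j^*$ factors through classes invariant under it. Now by the remark just before Lemma \ref{hatomega}, $j^*(w(\hat\omega;t))=c(\omega;t^2)$, so $j^*(w_{2k}(\hat\omega))=c_k(\omega)$; since $w_{2k}(\hat\omega)\in\im(j^*)$... rather, since $j^*$ of a genuine class lies in $\fix(H^{2k}(\sigma;\mathbb{Z}_2))$, we get $c_k(\omega)=j^*(w_{2k}(\hat\omega))\in\fix(H^{2k}(\sigma;\mathbb{Z}_2))$. (Equivalently, $c_k(\omega)$ is $\sigma^*$-invariant because $\sigma^*\omega\cong\bar\omega$ and $c_k(\bar\omega)=(-1)^kc_k(\omega)\equiv c_k(\omega)\bmod 2$ — this is the cleanest route and I would present it this way.)

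For (iii), with $r=1$ we have $\hat\omega$ a real $2$-plane bundle, so $w(\hat\omega)=1+w_1(\hat\omega)+w_2(\hat\omega)$. By Lemma \ref{hatomega}, $\hat\omega\cong\xi\otimes\hat\omega$; taking $w_1$ of both sides and using that $\xi\otimes\hat\omega$ is again a rank-$2$ bundle with $w_1(\xi\otimes\hat\omega)=w_1(\hat\omega)+2w_1(\xi)=w_1(\hat\omega)$... this is automatically satisfied, so instead I compare $w_2$: $w_2(\xi\otimes\eta)=w_2(\eta)+w_1(\xi)w_1(\eta)+w_1(\xi)^2$ for a rank-$2$ bundle $\eta$, giving $0=x\,w_1(\hat\omega)+x^2$, i.e. $w_1(\hat\omega)=x$ (using that $x$ is not a zero-divisor on the relevant graded piece — here I would instead just evaluate $s^*$: by (i), $s^*w(\hat\omega)=w(\epsilon_{\mathbb R}\oplus\zeta)=1+y+$ wait, $w(\zeta)=1+y$ where $y$ generates $H^1(\mathbb{R}P^m)$, so $s^*w_1(\hat\omega)=y$, hence $w_1(\hat\omega)=x$ since $s^*$ is injective on $H^1$ by (1) in degree $1$). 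For $w_2$: by (i), $s^*w_2(\hat\omega)=w_2(\zeta)=0$, so $w_2(\hat\omega)\in\ker s^*=\fix(H^2(\sigma;\mathbb{Z}_2))$; and $j^*w_2(\hat\omega)=c_1(\omega)$ by the Chern-polynomial relation. Under the identification of $\fix(H^2(\sigma;\mathbb{Z}_2))$ with $\ker s^*\subseteq H^2(P(m,X);\mathbb{Z}_2)$ via the splitting of (1), the element of $\ker s^*$ whose $j^*$-image is $c_1(\omega)$ is by definition $\widetilde{c_1(\omega)}$. Hence $w_2(\hat\omega)=\tilde c_1(\omega)$ and $w(\hat\omega)=1+x+\tilde c_1(\omega)$.

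The main obstacle is the bookkeeping in (i): correctly matching the conjugate-linear involution $\hat\sigma_{x_0}$ on the fibre $V$ with the model involution $\varepsilon_{r,r}$ from \S\ref{dependence}, so that the associated real bundle over $\mathbb{R}P^m$ comes out as $r\zeta\oplus r\epsilon_{\mathbb R}$ rather than, say, $r\zeta\oplus r\zeta$ or some other split form; this is exactly the "dependence on $\hat\sigma$" subtlety flagged in that subsection, and it must be handled by exhibiting an explicit real basis of $U=\fix(\hat\sigma_{x_0})$. Everything else is formal manipulation with $w$, $c$, the exact sequence (1), and the injectivity statements for $s^*$ and $j^*$.
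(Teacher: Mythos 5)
Your proposal is correct and follows essentially the same route as the paper: the $\pm 1$-eigenspace decomposition $V=U\oplus\sqrt{-1}\,U$ of the fibre under $\hat\sigma_{x_0}$ for (i), the isomorphism $\sigma^*\omega\cong\bar\omega$ together with $c_k(\bar\omega)=(-1)^k c_k(\omega)$ for (ii), and the evaluation of $s^*$ and $j^*$ on $w_1,w_2$ combined with the splitting of the exact sequence (1) for (iii). The brief detours (the spectral-sequence variant of (ii) and the aborted $\xi\otimes\hat\omega$ computation in (iii)) are harmless, since you ultimately settle on exactly the arguments the paper uses.
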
  
\begin{proof} (i) Since $\sigma(x_0)=x_0$, $\hat\sigma$ restricts to a conjugate complex linear automorphism $\hat\sigma_0$ of $V:=p_\omega^{-1}(x_0)$.   
Let $U\subset V$ is the eigenspace of $\hat\sigma_0$ corresponding to 
eigenvalue $1$ of $\hat\sigma_0$.   Then $\sqrt{-1}U$ is the $-1$ eigenspace.  The vector bundle 
$s^*(\hat\omega)$ is isomorphic to the Whitney sum of the bundles $\mathbb{S}^m\times_{\mathbb{Z}_2} U\to 
\mathbb{R}P^m$ and $\mathbb{S}^m\times_{\mathbb{Z}_2}\sqrt{-1}U\to \mathbb{R}P^m$.
Evidently these bundles are isomorphic to $r\epsilon_\mathbb{R}$ and $r\xi$ respectively.  

(ii)  Since $\hat\sigma: E(\omega)\to E(\omega)$ is a {\it conjugate} complex linear bundle map covering $\sigma$, we have 
$\sigma^*(\omega)\cong \bar \omega$.  So $\sigma^*(c_k(\omega))=c_k(\sigma^*(\omega))=(c_k(\bar\omega))=
(-1)^kc_k(\omega)\in H^{2k}(X;\mathbb{Z})$.  Therefore $c_k(\omega)\in \fix(H^{2k}(\sigma;\mathbb{Z}_2)), ~k\le r.$

(iii)  Using the isomorphism $s^*:H^1(P(m,X);\mathbb{Z}_2)\cong H^1(\mathbb{R}P^m;\mathbb{Z}_2)$, it follows from (i) that  $w_1(\hat\omega)=w_1(\xi)=x$.  
Since $c_1(\omega)\in \fix(H^2(\sigma;\mathbb{Z}_2))$, the element $\tilde{c}_1(\omega)$ is meaningful.  
It remains to show that $w_2(\hat\omega)=\tilde c_1(\omega)$.  Since $j^*(\hat\omega)=\omega$, we 
see that $j^*(w_2(\hat\omega))=w_2(\omega)=c_1(\omega)\in \fix(H^2(\sigma;\mathbb{Z}_2))$.  On the other hand, $w_2(s^*(\hat\omega))=0$. 
So, under our identification of $\fix(H^2(\sigma;\mathbb{Z}_2))$ with the kernel of $s^*$, we have $w_2(\hat\omega)=
\tilde c_1(\omega)$.
\end{proof}

\begin{remark}
{\em 
The above lemma shows that the element 
$\tilde{c}_1(\omega)\in H^2(P(m,X);\mathbb{Z}_2)$ is independent of the choice of the fixed point $x_0\in X$ 
(used in the definition of $s^*$) since it equals $w_2(\hat{\omega})$.
}
\end{remark}

Suppose that $\omega$ is a $\sigma$-conjugate complex vector bundle of rank $r$ over $X$.   
Since $q^*(\omega)$ splits as a Whitney sum $q^*(\omega)=\oplus_{1\le j\le r} \omega_j$, where $q:\flag(\omega)\to X$ is the $\flag(\mathbb{C}^r)$-bundle, in view of Example \ref{flag-assumptions}, we have $c_1(\omega_j)\in \fix(H^2(\theta;\mathbb{Z}_2))$. 
Therefore we obtain their `lifts' $\tilde{c}_1(\omega_j)\in H^2(P(m,\flag(\omega);\theta);\mathbb{Z}_2)$.     
The bundle $\hat{q}^*(\hat\omega)$ splits as $\hat q^*(\hat\omega)=\oplus_{1\le j\le r}\hat\omega_j$ (see Proposition \ref{splitting}(i)), 
where $\hat{q}: P(m,\flag(\omega),\theta)\to P(m,X,\sigma)$ is the projection of the $\flag(\mathbb{C}^r)$-bundle.
Therefore $e_j(\tilde{c}_1(\omega_{1}),\ldots, \tilde{c}_1(\omega_{r}))=
e_j(w_2(\hat\omega_1),\ldots, w_2(\hat\omega_r))$ 
is in $H^{2j}(P(m,X,\sigma);\mathbb{Z}_2)$. 
Here $e_j$ stands for the $j$-th elementary symmetric polynomial.

\noindent
{\bf Notation:} {\it 
Set $\tilde{c}_j(\omega):=e_j(w_2(\hat\omega_1), \ldots, w_2(\hat\omega_r))\in 
H^{2j}(P(m,X,\sigma);\mathbb{Z}_2), ~1\le j\le r.$}

When $j>r$, $\tilde{c}_j=0$.  
Observe that $\tilde{c}_j(\omega)$ restricts to $c_j(\omega)
\in H^{2j}(X;\mathbb{Z}_2)$ on any fibre of $\pi: P(m,X,\sigma);\mathbb{Z}_2)\to \mathbb{R}P^m$.   

We have the following formula for the Stiefel-Whitney classes of $\hat\omega$. 

\begin{proposition}  \label{swformula1}
We keep the above notations.  Let 
$\omega$ be a $\sigma$-conjugate complex vector bundle over $X$.  Suppose that $H^1(X;\mathbb{Z}_2)=0$ 
and that $\fix(\sigma)\ne \emptyset$. 
Then, 
\[w(\hat\omega;t)=\sum_{0\le j\le r}(1+xt)^{r-j} \tilde{c}_{j}(\omega)t^{2j}.\eqno(2)\]
\end{proposition}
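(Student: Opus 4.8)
The plan is to reduce the general rank-$r$ case to the rank-one case (Lemma \ref{linebundle}(iii)) via the splitting principle (Proposition \ref{splitting}) together with the Leray–Hirsch identification of the cohomology of the flag bundle. First I would pull everything back along $\hat q:P(m,\flag(\omega),\theta)\to P(m,X,\sigma)$, which by Proposition \ref{splitting}(ii) is injective on $\mathbb{Z}_2$-cohomology; so it suffices to verify the identity $(2)$ after applying $\hat q^*$. By Proposition \ref{splitting}(i) we have $\hat q^*(\hat\omega)=\bigoplus_{1\le j\le r}\hat\omega_j$, and each $\hat\omega_j$ is the real $2$-plane bundle associated to the $\theta$-conjugate complex \emph{line} bundle $\omega_j$. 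Hence by Whitney's formula and Lemma \ref{linebundle}(iii),
\[
\hat q^*\big(w(\hat\omega;t)\big)=\prod_{1\le j\le r}w(\hat\omega_j;t)=\prod_{1\le j\le r}\big(1+xt+\tilde c_1(\omega_j)t^2\big),
\]
where I am writing $x$ for $\hat q^*(x)=w_1(\hat\omega_j)$ (the generator of $H^1$, by the Lemma preceding the splitting discussion and Lemma \ref{linebundle}(iii)), and $\tilde c_1(\omega_j)=w_2(\hat\omega_j)$.

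Next I would expand this product. Writing $a_j:=\tilde c_1(\omega_j)t^2$ and factoring $(1+xt)$ out of each term that does not contribute an $a_j$, the elementary-symmetric expansion gives
\[
\prod_{1\le j\le r}\big((1+xt)+a_j\big)=\sum_{0\le j\le r}(1+xt)^{r-j}\,e_j(a_1,\dots,a_r)=\sum_{0\le j\le r}(1+xt)^{r-j}\,e_j\big(\tilde c_1(\omega_1),\dots,\tilde c_1(\omega_r)\big)\,t^{2j}.
\]
By the Notation introduced just before the proposition, $e_j(\tilde c_1(\omega_1),\dots,\tilde c_1(\omega_r))=e_j(w_2(\hat\omega_1),\dots,w_2(\hat\omega_r))=\tilde c_j(\omega)$, so the right-hand side is exactly $\hat q^*$ of the claimed expression $\sum_{0\le j\le r}(1+xt)^{r-j}\tilde c_j(\omega)t^{2j}$ — here one uses that $\hat q^*(x)=x$ and that $\hat q^*(\tilde c_j(\omega))$ is, by construction, $e_j(w_2(\hat\omega_1),\dots,w_2(\hat\omega_r))$. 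Since $\hat q^*$ is a ring monomorphism, cancelling it yields $(2)$.

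The one point that needs care — and is the main obstacle — is bookkeeping about \emph{where} the classes live and why the computation above is legitimate: $w_1(\hat\omega)$ a priori lies in $H^1(P(m,X,\sigma);\mathbb{Z}_2)\cong\mathbb{Z}_2\cdot x$, so one must check $w_1(\hat\omega)=x$ (not $0$), which follows from Lemma \ref{linebundle} applied to $\omega$ itself (or from the rank-one computation after splitting, noting each $w_1(\hat\omega_j)=x$ and Whitney additivity forces $w_1(\hat\omega)=rx$, but the parity is irrelevant once we work in the polynomial identity since $(1+xt)^r$ appears). More importantly, the definition of $\tilde c_j(\omega)$ as an element of $H^{2j}(P(m,X,\sigma);\mathbb{Z}_2)$ rests precisely on the fact that $e_j(w_2(\hat\omega_1),\dots,w_2(\hat\omega_r))$ lies in the image of $\hat q^*$, which in turn is justified by the $S_r$-invariance in Proposition \ref{splitting}(iii) combined with the splitting $\hat q^*(\hat\omega)=\bigoplus\hat\omega_j$; I would invoke the discussion preceding the proposition for this and simply remark that the identity $(2)$ is obtained by applying the injective map $\hat q^*$ and using the definitions. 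Finally, the restriction to a fibre of $\pi$ recovers $w(\omega;t)=c(\omega;t^2)$, which is consistent with $(2)$ since there $x\mapsto 0$ and $\tilde c_j(\omega)\mapsto c_j(\omega)$ — a sanity check worth stating but not needed for the proof.
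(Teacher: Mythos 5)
Your proposal is correct and follows essentially the same route as the paper's own proof: reduce to the line-bundle case of Lemma \ref{linebundle}(iii) via the splitting principle of Proposition \ref{splitting}, apply the Whitney formula to $\hat q^*(\hat\omega)=\bigoplus_j\hat\omega_j$, expand in elementary symmetric polynomials, and conclude by the definition of $\tilde c_j(\omega)$ together with the injectivity of $\hat q^*$. The extra bookkeeping you supply (where the classes live, why $e_j(w_2(\hat\omega_1),\dots,w_2(\hat\omega_r))$ is identified with a class on $P(m,X,\sigma)$) is exactly the content of the discussion the paper places before the proposition, so nothing is missing.
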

\begin{proof}
The case when $\omega$ is a line bundle was settled in Lemma \ref{linebundle}.
In the more general case, we apply 
the splitting principle, Proposition \ref{splitting}(i).
The bundle isomorphism 
$\hat q^*(\hat\omega)=\hat\omega_1\oplus\cdots\hat\omega_r$ given in Proposition \ref{splitting}(i) 
leads to the formula 
\[w(\hat\omega;t)=\prod_{1\le j\le r}(1+xt+\tilde{c}_1(\omega_j)t^2). \]
The proposition follows from Lemma \ref{linebundle} and the definition of $\tilde{c}_j(\omega)$ 
since $w_2(\hat\omega_j)=\tilde c_1(\omega_j)$. 
\end{proof}

\section{The tangent bundle of $P(m,X)$}
Let $X$ be a connected almost complex manifold and let $\sigma:X\to X$ be a complex conjugation. 
Thus $\hat\sigma=T\sigma$ is a $\sigma$-conjugation.  The manifold $P(m,X,\sigma)$ will be more briefly denoted $P(m,X)$.  
The bundle $\hat\tau X$ restricts to the tangent bundle along 
any fibre of $\pi: P(m,X)\to \mathbb{R}P^m$ and so is a subbundle of $\tau P(m,X)$.  Clearly $\hat\tau X$ is contained in the  
kernel of $T\pi:TP(m,X)\to T\mathbb{R}P^m$.  In fact $\hat \tau X=\ker(T\pi)$ since their ranks are equal.  Therefore we have 
a Whitney sum decomposition 
\[\tau P(m,X)=\pi^*(\tau \mathbb{R}P^m)
\oplus \hat \tau X. \eqno(3)\]   
We assume that 
$\fix(\sigma)$ is non-empty and hence a smooth manifold of dimension $d=(1/2) \dim X$.  Also we assume 
that $H^1(X;\mathbb{Z}_2)=0$. 
Using the fact that $w(\mathbb{R}P^m)=(1+x)^{m+1}$, and applying 
Proposition \ref{swformula1}, we have 

\begin{theorem} \label{swformulaforp} 
Let $X$ be a connected compact almost complex manifold with complex conjugation 
$\sigma$.  Suppose that $\fix(\sigma)\ne \emptyset$ and that $H^1(X;\mathbb{Z}_2)=0$.
Then: \[w(P(m,X);t)=(1+xt)^{m+1} .\sum_{0\le j\le d}(1+xt)^{d-j}\tilde{c}_j(X)t^{2j}.
\eqno(4)~~~\Box\] 
\end{theorem}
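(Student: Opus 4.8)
The plan is to simply combine the Whitney sum decomposition $(3)$, $\tau P(m,X)=\pi^*(\tau\mathbb{R}P^m)\oplus\hat\tau X$, with the formula of Proposition \ref{swformula1}, using that the total Stiefel-Whitney polynomial is multiplicative under Whitney sum:
\[w(P(m,X);t)=w\bigl(\pi^*(\tau\mathbb{R}P^m);t\bigr)\cdot w(\hat\tau X;t).\]
So the whole proof reduces to identifying the two factors on the right-hand side.

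For the first factor I would invoke naturality of Stiefel-Whitney classes: $w(\pi^*(\tau\mathbb{R}P^m);t)=\pi^*\bigl(w(\tau\mathbb{R}P^m;t)\bigr)$. Since $w(\tau\mathbb{R}P^m)=(1+x)^{m+1}$ with $x$ the generator of $H^1(\mathbb{R}P^m;\mathbb{Z}_2)$, and since assumption (a) gives that $\pi^*$ is a monomorphism (via the section $s$), under the identification of $H^*(\mathbb{R}P^m;\mathbb{Z}_2)$ with its image in $H^*(P(m,X);\mathbb{Z}_2)$ this factor is exactly $(1+xt)^{m+1}$. For the second factor I would apply Proposition \ref{swformula1} to the bundle $\omega=\tau X$: its hypotheses hold because $\sigma$ is a complex conjugation, so $\hat\sigma=T\sigma$ is a $\sigma$-conjugation on $\tau X$ (Example \ref{basic}(iii)), and by assumption $\fix(\sigma)\ne\emptyset$ and $H^1(X;\mathbb{Z}_2)=0$; the complex rank of $\tau X$ is $r=d=\tfrac12\dim X$. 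Hence $w(\hat\tau X;t)=\sum_{0\le j\le d}(1+xt)^{d-j}\tilde c_j(\tau X)t^{2j}$, and writing $\tilde c_j(X):=\tilde c_j(\tau X)$ (as in the Notation preceding Proposition \ref{swformula1}) and multiplying by the base factor yields $(4)$.

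There is essentially no serious obstacle here — the statement is a direct corollary of the machinery already built — but the one point worth being careful about is the bookkeeping of the class $x$. One must make sure that the $x$ in the base factor $(1+xt)^{m+1}$, namely $\pi^*$ of the generator of $H^1(\mathbb{R}P^m;\mathbb{Z}_2)$, agrees with the $x\in H^1(P(m,X);\mathbb{Z}_2)$ used to define $\tilde c_j$ and appearing in $w(\hat\tau X;t)$; this is precisely the identification recorded after the lemma on $H^1(P(m,\flag(\omega),\theta);\mathbb{Z}_2)$ and in assumption (a), so that $(4)$ is an honest equality in the single ring $H^*(P(m,X);\mathbb{Z}_2)[t]$. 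As a consistency check one notes that the real ranks match: $\tau P(m,X)$ has rank $m+2d=\dim P(m,X)$, which is the top degree appearing on both sides of $(4)$, and restriction to a fibre of $\pi$ recovers $w(\tau X;t)=c(\tau X;t^2)$ as it must.
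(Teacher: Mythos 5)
Your proof is correct and is essentially the paper's own argument: the paper likewise derives $(4)$ directly from the Whitney sum decomposition $(3)$, the formula $w(\mathbb{R}P^m)=(1+x)^{m+1}$ pulled back via $\pi^*$, and Proposition \ref{swformula1} applied to $\omega=\tau X$ with $\hat\sigma=T\sigma$. Your extra care about identifying $x\in H^1(P(m,X);\mathbb{Z}_2)$ with $\pi^*$ of the generator is exactly the identification the paper fixes in its simplifying assumption (a) and the discussion following the lemma on $H^1$.
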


As an application of the above theorem we obtain 

\begin{corollary} \label{spin} 
(i) $P(m,X)$ is orientable if and only if $m+d$ is odd.\\
 (ii) $P(m,X)$ admits a spin structure if and only if $X$ admits a spin structure and $m+1\equiv d \mod 4$ when $m>1$. 
\end{corollary}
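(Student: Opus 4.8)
The plan is to read off both statements directly from the Stiefel--Whitney formula (4) of Theorem \ref{swformulaforp}, using the standard facts that orientability is equivalent to $w_1 = 0$ and that a spin structure exists if and only if $w_1 = 0$ and $w_2 = 0$. First I would extract $w_1(P(m,X))$ from (4): the only contributions to the degree-one part come from the factor $(1+xt)^{m+1}$ and from the $j=0$ term $(1+xt)^d$ of the sum, since each $\tilde c_j(X)$ with $j\ge 1$ sits in degree $\ge 2$ and the lowest-degree term of the sum is $1$. Hence $w_1(P(m,X)) = (m+1+d)\,x$, and since $x$ generates $H^1(P(m,X);\mathbb Z_2)\cong\mathbb Z_2$ under our simplifying assumption (b), orientability holds precisely when $m+1+d\equiv 0$, i.e.\ when $m+d$ is odd. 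This proves (i).

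For (ii), I would assume $m+d$ is odd (so that $w_1=0$) and compute the degree-two part $w_2(P(m,X))$ from (4). Writing $(1+xt)^{m+1+d} = \sum_k \binom{m+1+d}{k} x^k t^k$ and collecting the $t^2$-coefficient, the contributions are: $\binom{m+1+d}{2}x^2$ from the $j=0$ term, and $\tilde c_1(X)$ from the $j=1$ term (the factor $(1+xt)^{m+1+d-2}$ contributes $1$ in degree $0$, matched against $\tilde c_1(X)t^2$). So $w_2(P(m,X)) = \binom{m+1+d}{2}x^2 + \tilde c_1(X)$. Now I use the exact sequence (1): $\tilde c_1(X)$ lies in $\ker(s^*)$ and restricts under $j^*$ to $c_1(X)\in H^2(X;\mathbb Z_2)$, which is $w_2(X)$ since $X$ is almost complex. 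Because $x^2 = s^*(\cdot)$-detectable while $\tilde c_1(X)$ is $j^*$-detectable, and (1) exhibits $H^2(P(m,X);\mathbb Z_2)$ as the direct sum of these two pieces, the vanishing of $w_2(P(m,X))$ is equivalent to the \emph{simultaneous} vanishing of $\binom{m+1+d}{2}\bmod 2$ and of $c_1(X) = w_2(X)$. The latter says precisely that $X$ is spin (as $H^1(X;\mathbb Z_2)=0$, $w_2(X)=0$ already forces a spin structure).

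The last step is to translate $\binom{m+1+d}{2}\equiv 0 \pmod 2$ into the stated congruence. Here I use that $m+d$ is odd, so $m+1+d$ is even, say $m+1+d = 2s$; then $\binom{2s}{2} = s(2s-1)\equiv s \pmod 2$, so the condition is $s$ even, i.e.\ $m+1+d\equiv 0\pmod 4$, equivalently $m+1\equiv d\pmod 4$. The only subtlety is the hypothesis ``when $m>1$'': for $m=1$ one has $x^2 = 0$ in $H^*(\mathbb RP^1;\mathbb Z_2)$, so the term $\binom{m+1+d}{2}x^2$ vanishes identically in $H^2(P(1,X);\mathbb Z_2)$ regardless of the binomial coefficient, and the mod-$4$ condition becomes vacuous; I would note this case separately. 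The main obstacle is making the splitting of $H^2(P(m,X);\mathbb Z_2)$ in (1) do the work cleanly --- i.e.\ justifying that no cancellation can occur between the $x^2$ term and $\tilde c_1(X)$ --- but this is exactly what the cross-section $s$ and the identification of $\tilde c_1(X)$ with $\ker(s^*)$ give us.
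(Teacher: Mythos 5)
Your extraction of $w_1(P(m,X))=(m+1+d)x$ and your use of the split sequence (1) to separate the $x^2$-component of $w_2$ from $\tilde c_1(X)$ are correct, and this is the same route the paper takes. The genuine problem is your final translation step. From $\binom{m+1+d}{2}\equiv 0\pmod 2$ with $m+1+d$ even you correctly obtain $m+1+d\equiv 0\pmod 4$, but this is \emph{not} equivalent to $m+1\equiv d\pmod 4$: the two congruences differ by $2d\bmod 4$, so they agree only when $d$ is even. Orientability only forces $m+1\equiv d\pmod 2$, so the case $m$ even, $d$ odd is allowed, and there your two formulations genuinely disagree. A concrete test: take $X=\mathbb{C}P^1$ (so $d=1$, $X$ spin, and $\tilde c_1(X)=0$ since $c_1(\mathbb{C}P^1)=2g\equiv 0 \bmod 2$) and $m=2$. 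Formula (4) gives $w(P(2,X);t)=(1+xt)^{4}$, hence $w_2=\binom{4}{2}x^2=0$ and $P(2,\mathbb{C}P^1)$ is spin, even though $m+1=3\not\equiv 1=d\pmod 4$; an independent Wu-class computation in $H^*(P(2,1);\mathbb{Z}_2)\cong \mathbb{Z}_2[u,v]/(u^3,v^2)$ ($\deg u=1,\deg v=2$) confirms $w_1=w_2=0$. So the condition your computation actually proves is $m+1+d\equiv 0\pmod 4$ (for $m>1$), and it cannot be rewritten as $m+1\equiv d\pmod 4$.

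It is worth noting how this interacts with the paper's own proof, which records $w_2(P(m,X))=\bigl(\binom{m+1}{2}+\binom{d}{2}\bigr)x^2+\tilde c_1(X)$, i.e.\ omits the cross term $(m+1)d\,x^2$ coming from multiplying the degree-one parts of $(1+xt)^{m+1}$ and $(1+xt)^{d}$; your expansion $\binom{m+1+d}{2}x^2+\tilde c_1(X)$ is the correct expansion of (4). The omitted term is nonzero exactly in the same case ($m$ even, $d$ odd) where your final ``equivalently'' fails, so the discrepancy is not a harmless reformulation: carried through honestly, the argument yields ``$P(m,X)$ is spin iff $X$ is spin, $m+d$ is odd, and ($m=1$ or $m+1+d\equiv 0\pmod 4$)'', which differs from the stated congruence $m+1\equiv d\pmod 4$ precisely when $m$ is even and $d$ is odd. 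In short: your computation is right up to the last sentence, and your handling of the $m=1$ case ($x^2=0$) is fine; the flaw is forcing the correct condition into the stated one rather than flagging the discrepancy.
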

\begin{proof}   
Since $P(m,X)=(\mathbb{S}^m\times X)/\mathbb{Z}_2$, it is readily seen 
that $P(m,X)$ is orientable if and only if the antipodal map of $\mathbb{S}^m$ 
and the conjugation involution $\sigma$ on $X$ are simultaneously either orientation 
preserving or orientation reversing. The latter condition is equivalent to $m+1\equiv d\mod 2$.
Alternatively, from Theorem \ref{swformulaforp}, we obtain that  
$w_1(P(m,X))=(m+1+d)x$, which is zero precisely if $m+d$ is odd.  

Using the same formula, we have 
$w_2(P(m,X))=({m+1\choose 2}+{d\choose 2})x^2 +\tilde{c}_1(X)$.   The existence of a spin 
structure being equivalent to vanishing of the first and the second Stiefel-Whitney classes, 
we see that $P(m,X)$ admits a spin structure if and only if $X$ admits a spin structure    
and ${m+1\choose 2}\equiv {d\choose 2} \mod 2$ with $m+d$ odd.  The latter condition is 
equivalent to $m+1\equiv d\mod 4$.
\end{proof}

The notions 
of stable parallelizability and parallelizability were recalled in the Introduction.   
Recall from \S\ref{dependence} the $\sigma$-conjugation $\varepsilon_{k,n-k}:X\times \mathbb{R}^n\to X\times \mathbb{R}^n$, defined with respect to a set of everywhere linearly independent sections $s_1,\ldots,s_n$.

\begin{theorem} \label{stableparallel}  Let $\sigma$ be a conjugation on a connected almost complex manifold $X$ and let $\dim_\mathbb{R}X=2d$.   Suppose that $\fix(\sigma)\ne \emptyset$. Then:\\ 
(i)  If $P(m,X)$ is stably parallelizable, then 
$X$ is stably parallelizable and $2^{\varphi(m)}|(m+1+d)$.  \\  
(ii)  Suppose that $\rho(\tau X)\oplus n\epsilon_\mathbb{R}\cong (2d+n) \epsilon_\mathbb{R}$ as {\em real} vector bundle. Suppose that the bundle map $\varepsilon_{d+k,d+n-k}$ of $(2d+n)\epsilon_{\mathbb{R}}$ covering $\sigma$ 
restricts to $\hat\sigma=T\sigma$ on $TX$ and to $\varepsilon_{k,n-k}$ on $n\epsilon_\mathbb{R}$.
If $2^{\varphi(m)}|(m+1+d) $, then $P(m,X)$ is stably parallelizable. \\
(iii) Suppose that $m$ is even and that $P(m,X)$ is stably parallelizable.    
Then $P(m,X)$ is parallelizable if and only 
if $\chi(X)=0$.
\end{theorem}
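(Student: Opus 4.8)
The plan is to prove the three parts of Theorem \ref{stableparallel} separately, leaning on the Whitney sum decomposition (3), the formula for $w(\hat\omega;t)$ in Proposition \ref{swformula1}, and the computational devices from \S\ref{dependence}.

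\textbf{Part (i).} Suppose $P(m,X)$ is stably parallelizable. First I would restrict to a fibre: the fibre inclusion $j:X\hookrightarrow P(m,X)$ pulls back $\tau P(m,X)$ to $\tau X\oplus$ (trivial), since by (3) $j^*(\tau P(m,X))=j^*\pi^*(\tau\mathbb{R}P^m)\oplus j^*(\hat\tau X)=m\epsilon_\mathbb{R}\oplus\tau X$ (the first summand is trivial because $\pi\circ j$ is constant, the second because $\hat\tau X$ restricts to $\tau X$ along a fibre). Hence $X$ is stably parallelizable. Next, restrict to a section: by Lemma \ref{linebundle}(i), $s^*(\hat\tau X)\cong d\epsilon_\mathbb{R}\oplus d\zeta$, and $s^*\pi^*(\tau\mathbb{R}P^m)=\tau\mathbb{R}P^m$ since $\pi\circ s=\mathrm{id}$, so $s^*(\tau P(m,X))=\tau\mathbb{R}P^m\oplus d\epsilon_\mathbb{R}\oplus d\zeta$. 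Since $\tau\mathbb{R}P^m\oplus\epsilon_\mathbb{R}\cong (m+1)\zeta$, stable triviality of $s^*(\tau P(m,X))$ forces $(m+1+d)\zeta$ to be stably trivial, i.e.\ $(m+1+d)([\zeta]-1)=0$ in $KO(\mathbb{R}P^m)$, which by Adams's theorem (quoted in the Introduction) is exactly $2^{\varphi(m)}\mid(m+1+d)$.

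\textbf{Part (ii).} Here I would produce an explicit stable trivialization. Under the hypothesis, $\varepsilon_{d+k,d+n-k}$ on $(2d+n)\epsilon_\mathbb{R}$ restricts to $T\sigma$ on $TX$ and to $\varepsilon_{k,n-k}$ on $n\epsilon_\mathbb{R}$; applying Lemma \ref{realisomorphism}(iii) gives $\hat\tau X\oplus k\xi\oplus(n-k)\epsilon_\mathbb{R}\cong(d+k)\xi\oplus(d+n-k)\epsilon_\mathbb{R}$. Feeding this into (3) and using $\xi\cong\pi^*(\zeta)$ together with $\pi^*(\tau\mathbb{R}P^m)\oplus\epsilon_\mathbb{R}\cong(m+1)\xi$, I get $\tau P(m,X)\oplus(\text{trivial})\cong (m+1+d+k)\xi\oplus(\text{trivial})$ for a suitable number of trivial summands; so it suffices that $(m+1+d+k)\xi$ be stably trivial, equivalently (since $\xi=\pi^*\zeta$ and $\pi^*$ is injective on the relevant $KO$-classes via the section $s$) that $(m+1+d)\zeta$ be stably trivial, i.e.\ $2^{\varphi(m)}\mid(m+1+d)$, which is assumed.

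\textbf{Part (iii).} Now $m$ is even, $P(m,X)$ is stably parallelizable, so $\dim P(m,X)=m+2d$ is even iff $m$ is even — fine, and I would use the classical fact (Bredon–Kosi\'nski, cited in the paper for Theorem \ref{parallel-grassmann}) that a stably parallelizable closed connected manifold is parallelizable precisely when its Euler characteristic vanishes. So the task reduces to computing $\chi(P(m,X))$ and showing $\chi(P(m,X))=0\iff\chi(X)=0$. Since $P(m,X)=(\mathbb{S}^m\times X)/\mathbb{Z}_2$ is a double cover image and $m$ is even so $\mathbb{S}^m\to\mathbb{R}P^m$ has the $\mathbb{Z}_2$ acting freely, multiplicativity of $\chi$ for the fibre bundle $\pi:P(m,X)\to\mathbb{R}P^m$ with fibre $X$ gives $\chi(P(m,X))=\chi(\mathbb{R}P^m)\cdot\chi(X)=1\cdot\chi(X)=\chi(X)$ (using $\chi(\mathbb{R}P^m)=1$ for $m$ even). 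Hence $\chi(P(m,X))=0$ iff $\chi(X)=0$, and the Bredon–Kosi\'nski criterion finishes the proof.

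The main obstacle is bookkeeping in part (ii): one must track the trivial summands carefully so that the isomorphism $\tau P(m,X)\oplus(\text{trivial})\cong(m+1+d+k)\xi\oplus(\text{trivial})$ genuinely has \emph{equal} ranks on both sides, and one must justify that stable triviality of $(m+1+d+k)\xi$ over $P(m,X)$ is equivalent to stable triviality of $(m+1+d)\zeta$ over $\mathbb{R}P^m$ — the $k$ extra copies of $\xi$ must cancel, which follows because $\xi$ restricted through $s$ is $\zeta$ and $\zeta$ has the property that $k\zeta\oplus\ell\epsilon_\mathbb{R}$ trivial for some $\ell$ is controlled by $k\bmod 2^{\varphi(m)}$; in fact the cleaner route is to observe $k\xi\oplus k\epsilon_\mathbb{R}\cong\rho(k\epsilon_\mathbb{C})^{\widehat{\ }}$ is stably trivial over $P(m,X)$ directly from the $\varepsilon_{k,k}$ description in \S\ref{dependence}, absorbing the nuisance term. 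Parts (i) and (iii) are routine given the earlier lemmas and the quoted theorems of Adams and Bredon–Kosi\'nski.
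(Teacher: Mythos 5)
Parts (i) and (iii) of your proposal are correct and essentially identical to the paper's proofs: for (i), restriction to a fibre plus restriction along the section $s$ together with Lemma \ref{linebundle}(i), $\tau \mathbb{R}P^m\oplus\epsilon_\mathbb{R}\cong(m+1)\zeta$ and Adams' theorem; for (iii), Bredon--Kosi\'nski plus Hopf's theorem and multiplicativity of $\chi$. (Your blanket statement that a stably parallelizable closed manifold is parallelizable iff $\chi=0$ is false in odd dimensions, but you only invoke it when $\dim P(m,X)=m+2d$ is even, where it is exactly what Bredon--Kosi\'nski and Hopf give.)

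The genuine gap is in part (ii), in the handling of the nuisance summand $k\xi$. What Lemma \ref{realisomorphism}(iii) combined with the decomposition (3) actually yields is
\[
\tau P(m,X)\oplus k\xi\oplus(n-k+1)\epsilon_\mathbb{R}\;\cong\;(m+1+d+k)\xi\oplus(d+n-k)\epsilon_\mathbb{R},
\]
and the copy of $k\xi$ on the left cannot simply be dropped; so your displayed relation $\tau P(m,X)\oplus(\mathrm{trivial})\cong(m+1+d+k)\xi\oplus(\mathrm{trivial})$ is not what comes out of the computation, and neither of your two patches repairs it. The claimed equivalence between stable triviality of $(m+1+d+k)\xi$ over $P(m,X)$ and of $(m+1+d)\zeta$ over $\mathbb{R}P^m$ is false unless $2^{\varphi(m)}\mid k$: since $s^*(\xi)=\zeta$, pulling back by the section shows $(m+1+d+k)\xi$ is stably trivial if and only if $2^{\varphi(m)}\mid(m+1+d+k)$. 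Your ``cleaner route'' is also wrong: the isomorphism $\widehat{k\epsilon_{\mathbb{C}}}\cong k\xi\oplus k\epsilon_\mathbb{R}$ from \S\ref{dependence} identifies the bundle, it does not trivialize it; restricting via $s$ gives $k\zeta\oplus k\epsilon_\mathbb{R}$ over $\mathbb{R}P^m$, which is stably nontrivial whenever $2^{\varphi(m)}\nmid k$. The correct step, which is what the paper does, is to cancel the common summand $k\xi\oplus(n-k)\epsilon_\mathbb{R}$ from both sides of the displayed isomorphism using the stable-range cancellation theorem \cite[Theorem 1.1, Ch.~9]{husemoller}; this is legitimate because the remaining bundles have rank $m+2d+1>\dim P(m,X)$, and it produces $\tau P(m,X)\oplus\epsilon_\mathbb{R}\cong(m+1+d)\xi\oplus d\epsilon_\mathbb{R}$. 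From there, $\xi=\pi^*(\zeta)$, Adams' theorem and the hypothesis $2^{\varphi(m)}\mid(m+1+d)$ finish the argument as you intend. (Equivalently, one may pass to $KO(P(m,X))$, where cancelling $k[\xi]$ is automatic, and then return to an honest bundle isomorphism by the same stable-range argument.) With that replacement, part (ii) is correct.
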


\begin{proof}
(i)  If $E\to B$ is {\it any} smooth fibre bundle with fibre $X$, the normal bundle to the fibre inclusion $X
\hookrightarrow E$ is trivial.  So if $E$ is stably parallelizable, then so is $X$.  It follows that 
stable parallelizability of $P(m,X)$ implies that of $X$.

Let $x_0\in \fix(\sigma)$ and let $s:\mathbb{R}P^m\to P(m,X)$ be the corresponding cross-section defined 
as $[v]\mapsto [v,x_0]$.  In view of  Lemma \ref{linebundle} and the bundle isomorphism (3), we see that $s^*(\tau P(m,X))=s^*(\pi^* \tau \mathbb{R}P^m
\oplus \hat \tau X)=\tau \mathbb{R}P^m\oplus d\epsilon_\mathbb{R}\oplus d\zeta\cong (m+1+d)\zeta\oplus (d-1)\epsilon_\mathbb{R}$.  Thus the stable parallelizability of $P(m,X)$ implies 
that $(m+1+d)([\zeta]-1)=0$ in $KO(\mathbb{R}P^m)$.  By the result of Adams \cite{adams} (recalled 
in \S\ref{intro})
it follows that $2^{\varphi(m)}|(m+1+d)$.   

(ii) Our hypothesis 
implies, using Lemma \ref{realisomorphism}, that 
$\hat{\tau}X \oplus (k\xi\oplus(n-k)\epsilon_\mathbb{R}) \cong (d+n-k)\epsilon_\mathbb{R}\oplus (d+k)\xi.$ 
Therefore, using the isomorphism (3),  
$\tau P(m,X)\oplus k\xi\oplus (n-k+1)\epsilon_\mathbb{R}\cong k\xi\oplus (n-k+1)\epsilon_\mathbb{R}\oplus 
\pi^*(\tau \mathbb{R}P^m)\oplus \hat{\tau} X\cong (m+1) \xi\oplus \hat{\tau}X\oplus k\xi\oplus (n-k)\epsilon_\mathbb{R}
\cong (m+1)\xi\oplus(d+k)\xi\oplus(d+n-k)\epsilon_\mathbb{R}$. 
Since $\dim P(m,X)=2d+m<2d+n+1+m,$ we may cancel the factor $k\xi\oplus (n-k)\epsilon_\mathbb{R}$ on both sides \cite[Theorem 1.1, Ch. 9]{husemoller}, leading to 
an isomorphism $\tau P(m,X)\oplus \epsilon_\mathbb{R} \cong (d+m+1) \xi\oplus d\epsilon_\mathbb{R}$.  Since $\xi=\pi^*(\zeta)$, again 
using Adams' result
 it follows that $P(m,X)$ is stably parallelizable if $2^{\varphi(m)}$ divides $(m+d+1)$.  

(iii) Since $m$ is even, $P(m,X)$ is even dimensional. 
By Bredon-Kosi\'nski's theorem \cite{bk}, it follows that $P(m,X)$ is parallelizable if and only if its span is at least 
$1$.  By Hopf's theorem, $\textrm{span~}P(m,X)\ge 1$ if and only if $\chi(P(m,X))$ vanishes.  Since $\chi(P(m,X))
=\chi(\mathbb{R}P^m).\chi(X)=\chi(X) $ as $m$ is even, the assertion follows.  
\end{proof}

The {\it stable span} 
of a smooth manifold $M$ is the largest number $s\ge 0$ such that $\tau M\oplus \epsilon_\mathbb{R}
\cong (s+1)\epsilon_\mathbb{R}\oplus \eta$ for some real vector bundle $\eta$.    We extend the notion of 
span and stable span to a (real) vector bundle $\gamma$ over a base space $B$ in an obvious mannner; thus 
$\textrm{span}(\alpha)$ is the largest number $r\ge 0$ so that $\gamma\cong\alpha\oplus r\epsilon_\mathbb{R}$ 
for some vector bundle $\alpha$.  If rank of $\gamma$ equals $n$ and if $B$ is a CW complex of dimension 
$d\le n$, then $\textrm{span}(\gamma)\ge n-d$.  See \cite[Theorem 1.1, Ch. 9]{husemoller}.   
It follows that if $n>d$, then $\textrm{span}(\gamma)
=\textrm{stable~span}(\gamma)$.

\begin{remark}{\em
(i) Suppose that $P(m,X)$ is stably parallelizable.  If $m$ is odd, then $\chi (P(m,X))=0$ as 
$\chi(\mathbb{R}P^m)=0$.  Consequently we obtain no information about $\chi(X)$ from the 
equality $\chi(P(m,X))=\chi(\mathbb{R}P^m) \chi(X)$.   Let us suppose that $\chi(X)\ne 0$.
Since $\textrm{span} ( \mathbb{R}P^m)=\textrm{span}(\mathbb{S}^m),$ 
we obtain the lower bound $\textrm{span} (P(m,X))\ge \textrm{span}(\mathbb{S}^m)=\rho(m+1)-1$, 
where $\rho(m+1)$ is the Hurwitz-Radon function defined as $\rho(2^{4a+b}(2c+1))=8a+2^b$, $0\le b<4, ~a,c \ge 0$. 
From Bredon-Kosi\'nski's theorem \cite{bk}, we obtain that $P(m,X)$ is parallelizable if $\rho(m+1)>\rho(m+2d+1). $ 
For example if $m=(2c+1)2^r-1$ and $d=2^{s}(2k+1)$ with $s< r-1$ then $m+1+2d=((2c+1)2^{r-1-s}+2k+1)2^{s+1}$ and 
so $\rho(m+1)=\rho(2^r)>\rho(2^{s+1})=\rho(m+2d+1)$; consequently $P(m,X)$ is parallelizable.    

(ii)  The following bounds for the span and stable span of $P(m,X)$ are easily obtained. \\
$\bullet ~ \textrm{stable ~span}(P(m,X))\le \min \{d
+\textrm{span}(m+d+1)\zeta, m+\textrm{stable~span}(X)\}$,\\ 
$\bullet~\textrm{span}(P(m,X))\ge \textrm{span}(\mathbb{R}P^m)$. \\
If $m$ is even and $\chi(X)=0$, then $\chi(P(m,X))=0.$  Since $\dim P(m,X)$ is even,  it follows 
by \cite[Theorem 20.1]{koschorke}, that $\textrm{span}(P(m,X))=\textrm{stable~span}(P(m,X))$. 
}   
\end{remark}

We illustrate Theorem \ref{stableparallel} in the case 
when $X$ is the complex flag manifold $\mathbb{C}G(n_1,\ldots, n_r)$, where the $n_j\ge 1$ are positive integers and 
$n=\sum_{1\le j\le r} n_j$, with its usual differentiable structure.   It admits an $U(n)$-invariant complex structure and the smooth involution $\sigma:X\to X$ defined by the complex conjugation on $\mathbb{C}^n$ 
is a conjugation, as remarked in Example \ref{flag-assumptions}(i).  
We assume, without 
loss of generality, that $n_1\ge \cdots\ge n_r$.   
We denote by $P(m; n_1,\ldots,n_r)$ the space 
$P(m,\mathbb{C}G(n_1,\ldots,n_r))$.  Note that $\mathbb{C}G(1,\ldots,1)$ is the complete flag manifold 
$\flag(\mathbb{C}^n)$.  

The classical Dold manifold corresponds to 
$r=2$ and $n_1\ge n_2=1$.  Theorem \ref{parallel-grassmann} in this special case is  
due to J. Korba\v{s} \cite{korbas}.   (Cf. \cite{ucci}, \cite{li}.)

\noindent
{\it Proof of Theorem \ref{parallel-grassmann}.  }
When $n_j>1$ for some $j$, the flag manifold $X=\mathbb{C}G(n_1,\ldots, n_r)$ is well-known to be 
{\it not} stably parallelizable; see, for example, \cite{sz}.  (Cf. \cite{korbas85}.)
So, by Theorem \ref{stableparallel}, the non-trivial part of theorem concerns the case when the 
flag manifold is stably parallelizable, namely, 
$n_j=1$ for all $j$.  It remains to determine the values of $m$ for which $P=P(m;1,\ldots,1)$ is 
stably parallelizable.  This is done in Proposition \ref{fullflag-parallel} below.

The manifold $X=\mathbb{C}G(1,\ldots,1)$ has non-vanishing Euler characteristic;   
in fact, $\chi(X)=n!$, the order of the Weyl group of $U(n)$.  When $m$ is even, it follows that 
$\chi(P)=n!$ and so  $\textrm{span}(P)=0$.     

Suppose that $\rho(m+1)>\rho(m+1+2{n\choose 2})$.  Then 
$\span (P)\ge \span(\mathbb{R}P^m)\ge \rho(m+1)-1$ whereas the span of the sphere 
of dimension $\dim P=m+2d=m+n(n-1)$ equals $\rho(m+1+n(n-1))-1$.  So, by Bredon-Kosi\'nski theorem \cite{bk}, 
$P$ is parallelizable if it is stably parallelizable and $\rho(m+1)>\rho(m+1+n(n-1))$. \hfill $\Box$ 

It is known that 
$\flag(\mathbb{C}^n)$ is stably parallelizable, but not parallelizable,  as a real manifold  (Cf. \cite[p.313]{lam}.)  
(The non-parallelizability of $\flag(\mathbb{C}^n)$ follows immediately from the fact that 
$\chi(\flag(\mathbb{C}^n))\ne 0$.) 

\subsection*{Lam's functor $\mu^2$}
As a preparation 
for the proof of Proposition \ref{fullflag-parallel} we recall a certain functor $\mu^2$ introduced by Lam \cite[\S\S4-5]{lam}.  This allows us to apply Lemma \ref{realisomorphism}(iii).

The functor $\mu^2=\mu^2_\mathbb{C}$ associates a real vector bundle to a complex vector bundle.\footnote{Lam defined $\mu^2$ in a more general setting that includes (left) vector bundles over quaternions as well.}   We assume the base space to be paracompact so that every complex vector bundle over it 
admits a Hermitian metric. 
If $V$ is any complex vector space $\mu^2(V)$ is defined as $\mu^2(V)=\bar V\otimes_\mathbb{C} V/\fix (\theta)$ where 
$\theta: \bar V\otimes V\to \bar V\otimes V$ is the conjugate complex linear automorphism 
defined as $\theta(u\otimes v)=-v\otimes u$.  As with any {\it continuous} functor (\cite[\S3(f)]{ms}), $\mu^2$ 
is determined by its restriction to the category of finite dimensional complex vector spaces and their isomorphisms.  
The functor $\mu^2$ has the following properties where $\omega, \omega_1,\omega_2$ are all complex vector bundles over a base space $X$.  The first three were established by Lam. \\
(i) $\textrm{rank} (\mu^2(\omega))=n^2$ where $n$ is the rank of $\omega$ as a complex vector bundle.  \\
(ii) $\mu^2(\omega)\cong \epsilon_\mathbb{R}$ if $\omega$ is a complex line bundle.  Indeed, choosing a 
positive Hermitian metric on $\omega$, 
the map $E(\mu^2(\omega))\ni [u\otimes zu]\mapsto (p_\omega(u), Re(z)||u||^2)\in X\times \mathbb{R}, z\in \mathbb{C}$ is a well-defined real vector bundle homomorphism. It is clearly non-zero and since the ranks agree, it is an isomorphism.\\
(iii)  $\mu^2(\omega_1\oplus \omega_2)=\mu^2(\omega_1)\oplus (\bar\omega_1\otimes_\mathbb{C}\omega_2)\oplus \mu^2(\omega_2)$.\\
(iv) If $\hat{\sigma}:E(\omega)\to E(\omega)$ is a conjugation of $\omega$ covering an involution $\sigma:X\to X$, then 
$\mu^2(\hat\sigma):E(\mu^2(\omega))\to E(\mu^2(\omega))$ is a bundle map covering $\sigma$.  In particular $\mu^2(\bar \omega)\cong \mu^2(\omega)$. \\
(v) If $\hat\sigma$ is a conjugation of a complex {\it line} bundle $\omega$ with a Hermitian metric $\langle .,.\rangle$ 
covering an involution $\sigma$ such that 
$\langle u,v\rangle_x=\overline{\langle \hat\sigma (u),\hat\sigma(v)\rangle}_{\sigma(x)}, u,v\in p^{-1}_\omega(x), x\in X$, 
then 
$\mu^2(\hat\sigma):\mu^2(\omega)\to \mu^2(\omega)$ is the identity on each fibre under the isomorphism 
$\mu^2(\omega)\cong \epsilon_\mathbb{R}$ of (ii) since $||\hat\sigma(u)||=||u||$.

 \begin{proposition}\label{fullflag-parallel}
 The manifold $P(m;1,\ldots,1)=P(m,\flag(\mathbb{C}^n))$ is stably parallelizable if and only if $2^{\varphi(m)}$ divides $(m+1+{n\choose 2})$.
 \end{proposition}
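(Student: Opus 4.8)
The plan is to reduce the stable parallelizability of $P(m,\flag(\mathbb{C}^n))$ to an equation in $KO(\mathbb{R}P^m)$ using the tangent bundle decomposition $(3)$ together with a concrete stable trivialization of $\tau\flag(\mathbb{C}^n)$ that is compatible with the conjugation $T\sigma$, so that Lemma~\ref{realisomorphism}(iii) applies. First I would invoke Theorem~\ref{stableparallel}(i): if $P:=P(m;1,\ldots,1)$ is stably parallelizable, then $2^{\varphi(m)}\mid (m+1+d)$ where $d=\dim_{\mathbb{C}}\flag(\mathbb{C}^n)=\binom{n}{2}$; this gives the "only if" direction immediately. For the converse I want to apply Theorem~\ref{stableparallel}(ii), so the crux is to produce an honest isomorphism $\rho(\tau\flag(\mathbb{C}^n))\oplus n'\epsilon_{\mathbb{R}}\cong (2d+n')\epsilon_{\mathbb{R}}$ of \emph{real} bundles, for some $n'$, in which the standard conjugation $\varepsilon_{d+k,\,d+n'-k}$ on the trivial summand restricts to $T\sigma$ on $T\flag(\mathbb{C}^n)$ and to $\varepsilon_{k,\,n'-k}$ on the complementary trivial bundle.

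The key step is the construction of that compatible stable trivialization via Lam's functor $\mu^2$. Writing $\xi_1,\ldots,\xi_n$ for the canonical complex line bundles over $\flag(\mathbb{C}^n)$, one has $\tau\flag(\mathbb{C}^n)\cong\bigoplus_{i<j}\hom_{\mathbb{C}}(\xi_i,\xi_j)\cong\bigoplus_{i<j}(\bar\xi_i\otimes\xi_j)$ as complex bundles, and the conjugation $L\mapsto\bar L$ on $\mathbb{C}^n$ induces $T\sigma$ compatibly with this splitting. Since $\xi_1\oplus\cdots\oplus\xi_n=n\epsilon_{\mathbb{C}}$ is trivial (with its standard conjugation restricting to the $\xi_i$), property (iii) of $\mu^2$ gives
\[\mu^2(n\epsilon_{\mathbb{C}})\;\cong\;\bigoplus_{i}\mu^2(\xi_i)\;\oplus\;\bigoplus_{i<j}(\bar\xi_i\otimes\xi_j),\]
and by property (ii) each $\mu^2(\xi_i)\cong\epsilon_{\mathbb{R}}$ while $\mu^2(n\epsilon_{\mathbb{C}})\cong n^2\epsilon_{\mathbb{R}}$ by (i). Hence $\rho(\tau\flag(\mathbb{C}^n))\oplus n\epsilon_{\mathbb{R}}\cong n^2\epsilon_{\mathbb{R}}$, which is exactly a stable trivialization with $n'=n$, $2d+n=n^2$. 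Now properties (iv) and (v) are precisely what I need to track the conjugations: $\mu^2(T\sigma)$ covers $\sigma$, it corresponds on the $\bar\xi_i\otimes\xi_j$ summand to the conjugation inducing $T\sigma$, and on each $\mu^2(\xi_i)\cong\epsilon_{\mathbb{R}}$ summand it is the identity (by (v), using the standard Hermitian metric satisfying the compatibility $\langle u,v\rangle=\overline{\langle\hat\sigma u,\hat\sigma v\rangle}$). Collecting the $\pm1$-eigenspace decomposition of $T\sigma$ on $\rho(\tau X)$ into $d$ plus-signs and $d$ minus-signs (since $T\sigma$ is conjugate-complex-linear on a rank-$d$ complex bundle), and the trivial summand contributing $n$ plus-signs, I get that $\mu^2(T\sigma)$ is the standard bundle map $\varepsilon_{d+n,\,d}$ on $n^2\epsilon_{\mathbb{R}}$ restricting to $T\sigma$ on $TX$ and to $\varepsilon_{n,0}$ on $n\epsilon_{\mathbb{R}}$ — matching the hypotheses of Theorem~\ref{stableparallel}(ii) with $k=n$, $n'=n$.

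With this in hand, Theorem~\ref{stableparallel}(ii) gives that $2^{\varphi(m)}\mid(m+1+d)=(m+1+\binom{n}{2})$ implies $P(m;1,\ldots,1)$ is stably parallelizable, completing the "if" direction and hence the proposition. The main obstacle I anticipate is not the homotopy-theoretic input — Adams' theorem is already packaged inside Theorem~\ref{stableparallel} — but the bookkeeping of \emph{equivariance}: verifying carefully that under the isomorphism $\tau\flag(\mathbb{C}^n)\cong\bigoplus_{i<j}\bar\xi_i\otimes\xi_j$ the differential $T\sigma$ really does correspond to the conjugation $\bigoplus\bar\vartheta_i\otimes\vartheta_j$ built from the conjugations $\vartheta_i:\bar v\mapsto\bar v$ on $\xi_i$, and that the Hermitian metrics can be chosen throughout so that property (v) applies on the line-bundle summands. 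This is the point where one must be precise about which bundle map covers $\sigma$, exactly as emphasized in \S\ref{dependence}; once the signs are pinned down, everything else is a direct appeal to the already-established Lemma~\ref{realisomorphism}(iii) and Theorem~\ref{stableparallel}.
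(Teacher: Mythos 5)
Your proposal is correct in substance and follows essentially the same route as the paper: Lam's formula $\tau\flag(\mathbb{C}^n)\cong\bigoplus_{i<j}\bar\gamma_i\otimes\gamma_j$, the stable trivialization coming from $\mu^2(n\epsilon_\mathbb{C})$ with the conjugation tracked through properties (ii)--(v), and the reduction to Adams' theorem as packaged in Theorem \ref{stableparallel} (the paper just carries out the $KO$-computation of part (ii) directly, reading off the ``only if'' direction from the same stable isomorphism rather than quoting part (i)).
One bookkeeping slip: since $\mu^2(\hat\sigma)$ is the identity on each $\mu^2(\gamma_i)\cong\epsilon_\mathbb{R}$, the combined involution is $\varepsilon_{d,\,d+n}$ with respect to the constant frame of $\mu^2(n\epsilon_\mathbb{C})\cong n^2\epsilon_\mathbb{R}$, restricting to $\varepsilon_{0,n}$ on the trivial summand (so $k=0$, not $k=n$ with $\varepsilon_{d+n,\,d}$ and $\varepsilon_{n,0}$ as you wrote; note also that $T\sigma$ has no global $\pm1$-eigenbundles in $\rho(\tau X)$ since it covers the nontrivial involution $\sigma$, so the $\pm$ count must be read off from that constant frame) --- this is harmless, as Theorem \ref{stableparallel}(ii) gives the same conclusion for any $k$.
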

\begin{proof}
Recall (\cite[Corollary 1.2]{lam}) that $\tau \mathbb{C}G(n_1,\ldots, n_r)\cong \oplus_{1\le i<j\le r}
 \bar\gamma_i\otimes \gamma_j$
where $\gamma_j$ is the $j$-th canonical bundle of rank $n_j$ whose fibre over 
$(L_1,\ldots,L_r)\in \mathbb{C}G(n_1,\ldots, n_r)$ is the complex vector space $L_j$.  We have 
\[ \gamma_1\oplus \cdots\oplus\gamma_r\cong n\epsilon_\mathbb{C}.\]
Applying $\mu^2$ and using the above description of $\tau \mathbb{C}G(n_1,\ldots,n_r)$ we obtain the following 
isomorphism of {\it real} vector bundles by repeated use of property (iii) of $\mu^2$ listed above:
\[ \bigoplus \mu^2(\gamma_j)\oplus \tau (\mathbb{C}G(n_1,\ldots, n_r))\cong n\epsilon_\mathbb{R}\oplus (\bigoplus_{1\le i<j\le n}\epsilon_\mathbb{C}(\bar e_i\otimes e_j))\cong n^2\epsilon_\mathbb{R}.\eqno(5)\]   
(Cf. \cite[Theorem 5.1]{lam}.)
Specialising to the case of $X=\flag(\mathbb{C}^n)$ we have $\mu^2(\gamma_j)\cong \epsilon_\mathbb{R}$.
The involution $\sigma:X\to X$ defined as ${\bf L}\mapsto \bar{\bf L}$ induces a 
complex conjugation of $\hat \sigma=T\sigma$ on $\tau X$ which preserves the summands 
$\omega_{ij}:=\bar \gamma_i\otimes \gamma_j, i<j,$ yielding a conjugation $\hat\sigma_{ij}$ on it.  
The bundle involution $\varepsilon_{d,d}$ (covering $\sigma$) on the summand on the right $\oplus_{1\le i<j\le n} \rho(\epsilon_\mathbb{C})$, defined with respect to the basis $\bar e_i\otimes e_j, \bar e_i\otimes \sqrt{-1}e_j, 1\le i<j\le n,$ and $\varepsilon_{0, n}$ 
on the summand $\oplus_{1\le i\le n}\epsilon_\mathbb{R}(\bar e_i\otimes e_i)$ defined with respect to $\bar e_i\otimes e_i, 1\le i\le n,$ 
together define an involution, denoted $\varepsilon$, that covers $\sigma$.  Under the isomorphism, $\varepsilon$ restricts 
to $T\sigma$ on $\tau X$ and to $\varepsilon_{0,n}$ on $\oplus_{1\le i\le n} \mu^2(\gamma_i)$ defined with respect to 
a basis $\bar{u}_i\otimes u_i,1\le i\le n,$ where $u_i\in L_i$ with $||u_i||=1$.     
It follows, by using (v) above and Lemma \ref{realisomorphism}, that 
\[n\epsilon_\mathbb{R}\oplus \hat \tau \flag(\mathbb{C}^n)\cong n\epsilon_\mathbb{R}\oplus {n\choose 2}(\epsilon_\mathbb{R}\oplus \xi).\]
Therefore $(n+1)\epsilon_\mathbb{R}\oplus \tau P\cong (m+1)\xi\oplus \hat\tau \flag(\mathbb{C}^n)\oplus n\epsilon_\mathbb{R}\cong (m+1+{n\choose 2})\xi\oplus {n+1\choose 2}\epsilon_\mathbb{R}$.  Hence $\tau P$ is stably trivial 
if and only if $(m+1+{n\choose 2})\xi$ is stably trivial if and only if $(m+1+{n\choose 2})\zeta$ on $\mathbb{R}P^m$ 
is stably trivial if and only if $2^{\varphi(m)}$ divides $(m+1+{n\choose 2})$.  This completes the proof. 
\end{proof}  

\begin{remark}{\em
It is clear that for a given $n\ge 2$, there are only finitely many values $m\ge 1$ for which $P=P(m,\flag(\mathbb{C}^n))$ 
is parallelizable.  In fact, since $2^{\varphi(m)}\ge 2m$ for $m\ge 8$, we must have $m\le \max\{8,{n\choose 2}\}$.  
However the required values of $m$ are highly restricted.  
For example when $n=2^s, s\ge 4$, $P$ is parallelizable only when $m\in \{1,3,7\}$ and when $n=2^s-2, s\ge 5$, $m\in \{2,6\}.$  When $n=6, P$ is not parallelizable for any $m$.}
\end{remark}

\subsection{More examples of parallelizable generalized Dold manifolds} \label{paralleldold} 
We give examples of parallelizable manifolds $P(m,X)$ for some other classes of $X$.   Specifically, 
we take $X$ to be certain (i) Hopf manifold, (ii) complex torus, and (iii) compact 
Clifford-Klein form of a (non-compact) complex Lie group.  In all these case, it turns out that $\fix(\sigma)\ne \emptyset$ and 
$\hat\tau X\cong d\xi\oplus d\epsilon_\mathbb{R}$.  In particular $\textrm{span}(P(m,X))\ge d$.  If $2^{\varphi(m)}$ divides $(m+1+d)$, then $P(m,X)$ is stably parallelizable.  Furthermore, if $d>\rho(m+2d)$,  then $P(m,X)$ is parallelizable.  
 
(i) Let $\lambda >1$.  The infinite cyclic subgroup $\langle \lambda\rangle$ of the multiplicative group $\mathbb{R}^\times_{>0}$ 
acts on $\mathbb{C}^d_0:=\mathbb{C}^d\setminus \{0\}$ via scalar multiplication.  
Consider the Hopf manifold $X=X_\lambda:=\mathbb{C}^d_0/\langle \lambda\rangle$.  
Then $X\cong \mathbb{S}^1\times \mathbb{S}^{2d-1}$ is parallelizable.  
Although $X_\lambda$ is defined for any complex number $\lambda$ with $|\lambda| \ne 1$, our 
hypothesis that $\lambda$ is real implies that complex conjugation on $\mathbb{C}^d$ induces an involution 
$\sigma$ on $X$. Moreover $\fix(\sigma)=(\mathbb{R}^{d}\setminus \{0\})/\langle \lambda\rangle$  
is non-empty. 
In fact $\fix(\sigma)\cong \mathbb{S}^1\times \mathbb{S}^{d-1}$.  We claim that 
$\tau X$ is isomorphic to $d\epsilon_\mathbb{C}$ as a complex vector bundle.  Indeed, scalar multiplication 
$\lambda:\mathbb{C}^d_0\to \mathbb{C}^d_0$ induces multiplication by $\lambda$ on the tangent 
space $T_z\mathbb{C}^d_0$ for any $z\in \mathbb{C}^d_0$.  Therefore $TX=(\mathbb{C}_0^d\times \mathbb{C}^d)/\langle \lambda\rangle$ where $\langle \lambda\rangle $ acts diagonally.  
The required isomorphism $\phi: TX\to X\times \mathbb{C}^n$ is then obtained as 
$[z,v]\mapsto ([z], v/||z||)$.   We observe that this is well-defined since $\lambda$ is positive.  
Moreover, $\phi(T\sigma ([z,v]))= \phi( [\bar{z},\bar v])=([\bar z], \bar{v}/||z||)$.   
Thus $T\sigma$ corresponds to complex conjugation on $d\epsilon_\mathbb{C}$ and so $\hat{\tau} X\cong d\xi\oplus d\epsilon$ 
by Theorem \ref{stableparallel}(ii).   

(ii) Let $X=X_\Lambda\cong (\mathbb{S}^1)^{2d}$ be the complex torus $\mathbb{C}^d/\Lambda$ where 
$\Lambda\cong \mathbb{Z}^{2d}$ is 
stable under conjugation; equivalently $\Lambda=\Lambda_0+\sqrt{-1}\Lambda_0$ where $\Lambda_0$ is 
a lattice in $\mathbb{R}^d$.  Then complex conjugation on $\mathbb{C}^d$ induces a conjugation 
$\sigma$ on $X$. It is readily seen that $\fix(\sigma)=(\mathbb{R}^d+\frac{\sqrt{-1}}{2}\Lambda_0)/\Lambda_0$.  Also  
 $\tau X\cong d\epsilon_\mathbb{C}$ as a complex vector bundle.  As in (i) above, $\hat{\tau}X\cong d\xi\oplus d\epsilon_\mathbb{R}$.

(iii) More generally, suppose that $G\subset GL(N,\mathbb{C})$ is a connected complex linear Lie group 
such that $G$ is stable by conjugation $A\mapsto \bar A$ in $GL(n,\mathbb{C})$.  Suppose that 
$\Lambda$ a discrete subgroup of $G$ such that $X=G/\Lambda$ is compact; that is, $\Lambda$ is a uniform lattice in $G$.  
Assume that $\bar \Lambda =\Lambda$.   (For example, 
$G$ is the group of unipotent upper triangular matrices in $GL(N,\mathbb{C})$ with $\Gamma$ the subgroup of $G$ consisting 
matrices with entries in $\mathbb{Z}[\sqrt{-1}]$.)      
Then $X=G/\Lambda$ 
is {\it holomorphically parallelizable,} i.e., $\tau X$ is trivial as a complex analytic vector bundle.  See \cite{akhizer}.  In particular, 
$\tau X\cong d\epsilon_\mathbb{C}$.   Let $p:G\to X$ be the covering projection.
Denoting by $\mathfrak{g}$ the Lie algebra of $G$, viewed as the space of 
vector fields on $G$ invariant under right translation, we have a bundle isomorphism $f:X\times \mathfrak{g}\to TX$ defined as 
$(g\Gamma, V)\mapsto Tp_g(V_g)~\forall V\in \mathfrak{g}$.  This is well-defined since $V$ is invariant under right-translation.   
Under this isomorphism, $T\sigma$ is the standard $\sigma$-conjugation on $d\epsilon_\mathbb{C}$. 
So $\hat \tau X\cong d\xi\oplus d\epsilon_\mathbb{R}$.  
As the
identity coset is fixed by $\sigma$, $\fix(\sigma)\ne \emptyset$.

\subsection{Unoriented cobordism}   
Recall from the work of Thom and Pontrjagin (\cite[Ch. 4]{ms}) that the (unoriented) cobordism class of a smooth closed manifold is determined by its Stiefel-Whitney numbers.   Let $\sigma$ be a complex conjugation on a connected almost complex 
manifold $X$ and let $\dim_\mathbb{R} X=2d$.  Assume that $\fix(\sigma)\ne \emptyset$ and that 
$H^1(X;\mathbb{Z}_2)=0$. 
Proposition \ref{swformula1} allows us to compute certain Stiefel-Whitney 
numbers of $P(m,X)$ in terms of those of $X$, even without the knowledge of the cohomology algebra $H^*(P(m,X);\mathbb{Z}_2)$.  Let 
$s:\mathbb{R}P^m\to P(m,X)$ be the cross-section corresponding to an $x_0\in \fix(\sigma).$  We identify $\mathbb{R}P^m$ with 
its image under $s$ and $X$ with the fibre over $[e_{m+1}]\in \mathbb{R}P^m$.  
Then $X\cap \mathbb{R}P^m=\{[e_{m+1},x_0]\}$ 
and the intersection is transverse. Denoting the mod $2$ Poincar\'e dual of a submanifold 
$M\hookrightarrow P(m,X)$ by $[M]$, we have $[\mathbb{R}P^m]. [X]=[\mathbb{R}P^m\cap X]=
[\{[e_{m+1},x_0]\}]$, which is the generator of 
$H^{m+2d}(P(m,X);\mathbb{Z}_2)\cong \mathbb{Z}_2$.   
    
We claim that the 
class $[X]\in H^m(P(m,X);\mathbb{Z}_2)$ equals $x^m$.   To see this, let $S_j$ be the sphere 
$S_j=\{v\in \mathbb{S}^{m}\mid v\perp  e_j\}, 1\le j\le m$. 
and let $X_j$ 
be the submanifold $\{[v,x]\mid v\in S_j,x\in X\}\cong P(m-1,X)$.   Let $u_0=(e_1+\ldots +e_m)/\sqrt{m}$.  Then 
$C:=\{[\cos(t) u_0+\sin (t)e_{m+1}, x_0]\in P(m,X)\mid 0\le t\le \pi \}\cong \mathbb{R}P^1$ meets $X_j$ transversally at $[e_{m+1},x_0]$.   So $[C].[X_j]\ne 0$. 
It follows that $[X_j]=x, 1\le j\le m,$ since $H^1(P(m,X);\mathbb{Z}_2)=\mathbb{Z}_2x$.    
Also (i) $\cap_{1\le i<j}X_i$ intersects $X_j$ transversely for any $j\le m$,  and, (ii) 
$\cap_{1\le j\le m} X_j = X$.   
It follows that $[X]=[X_1]\cdots [X_m]=x^m$ as claimed.

Denote by $\mu_X, \mu_{P(m,X)}$ the mod $2$ fundamental 
classes of $X, P(m,X)$ respectively.  
Note that $w_{2j}(P(m,X))$ is of the form $w_{2j}(P(m,X))=\tilde{c}_j(X)+a_1x^2\tilde{c}_{j-1}(X)+\ldots+a_kx^{2k}\tilde{c}_{j-k}(X)$ for suitable $a_i\in\{0,1\}, 1\le i\le k,$
where $k=\min\{\lfloor m/2\rfloor , j\}$.   Similarly $w_{2j+1}(P(m,X))=b_0x\tilde{c}_j(X)+b_1x^3\tilde{c}_{j-1}(X)+\ldots+b_kx^{2k+1}\tilde{c}_{j-k}$, $b_i\in \{0,1\}, 0\le i\le k,$ with $k=\min\{\lfloor (m-1)/2\rfloor, j\}$.  A straightforward calculation using Theorem 
\ref{swformulaforp}  
reveals that $b_0=m+1+d-j.$  
Let $J=j_1,\dots, j_r$ be a sequence of positive integers with $|J|:=j_1+\cdots+j_r=m+2d$. 
Then $w_J(P(m,X)):=w_{j_1}(P(m,X))\ldots w_{j_r}(P(m,X))$
 is a polynomial in $x$ over the subring 
$\mathbb{Z}_2[\tilde{c}_1(X),\ldots,\tilde{c}_d(X)]\subset H^*(P(m,X);\mathbb{Z}_2)$.  
Since $x^{m+1}=0$, we see that $w_J(P(m,X))=0$ if the number of odd numbers among $j_k, 1\le k\le r,$ exceeds 
$m$.

Suppose that $I=i_1, \ldots, i_k; J=1^m.2I=1^m,2i_1,\ldots, 2i_k, $  (i.e., $j_t=1, 1\le t\le m$) and  
$P(m,X)$ is non-orientable,  so that $w_1(P(m,X))=x$, we 
have $w_J(P(m,X))=x^m.\tilde{c}_{I}(X)$.  Using $j^*(\tilde{c}_I(X))=c_I(X)=w_{2I}(X)$, we obtain that 
$w_J[P(m,X)]:=\langle w_J(P(m,X)),\mu_{P(m,X)}\rangle=\langle x^m.w_{2I}(P(m,X)), \mu_{P(m,X)}\rangle
=\langle w_{2I}(X),\mu_X\rangle=w_{2I}[X]\in \mathbb{Z}_2$.

\begin{theorem}\label{null-nonorient}  Suppose that $H^1(X;\mathbb{Z}_2)=0$ and that $ \fix(\sigma)\ne \emptyset$.\\
(i) Assume that $m\equiv d \mod 2$. If $[X]\ne 0$ in $\mathfrak{N}$, then $[P(m,X)]\ne 0$. \\
(ii) If $[P(1,X)]\ne 0$, then $[X]\ne 0$. 
\end{theorem}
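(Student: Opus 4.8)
The plan is to derive both statements from the Stiefel--Whitney number computations assembled in the paragraphs preceding the theorem, exploiting crucially that $X$ is almost complex. I would begin by recording the structural fact that, since $w(X;t)=c(X;t^2)\bmod 2$, all odd Stiefel--Whitney classes of $X$ vanish and $w_{2i}(X)\equiv c_i(X)$; consequently every nonzero Stiefel--Whitney number of $X$ has all even lower indices, i.e.\ is of the form $w_{2I}[X]=\langle c_{i_1}(X)\cdots c_{i_k}(X),\mu_X\rangle$ for a partition $I=(i_1,\dots,i_k)$ of $d$, and $[X]\neq 0$ in $\mathfrak{N}$ precisely when one such number is nonzero.

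For part (i), I would fix such an $I\vdash d$ with $w_{2I}[X]\neq 0$. Since $m\equiv d\bmod 2$ makes $m+d$ even, $P(m,X)$ is non-orientable by Corollary~\ref{spin}(i), so $w_1(P(m,X))=x$. Taking $J=1^m.2I$, which has total weight $m+2d=\dim P(m,X)$, the computation carried out just before the statement gives $w_J[P(m,X)]=w_{2I}[X]\neq 0$, hence $[P(m,X)]\neq 0$. Thus this part is essentially a direct assembly of facts already in hand; the only substantive point is that the almost-complex hypothesis puts the detecting Stiefel--Whitney number of $X$ among those reachable through the classes $\tilde{c}_j$.

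For part (ii) I would specialize Theorem~\ref{swformulaforp} to $m=1$. Since $x$ is pulled back from $\mathbb{R}P^1$ and $H^2(\mathbb{R}P^1;\mathbb{Z}_2)=0$, we have $x^2=0$ in $H^*(P(1,X);\mathbb{Z}_2)$, so $(1+xt)^2=1$ and $(1+xt)^{d-j}=1+(d-j)xt$, yielding $w_{2j}(P(1,X))=\tilde{c}_j(X)$ and $w_{2j+1}(P(1,X))=(d-j)\,x\,\tilde{c}_j(X)$. As $\dim P(1,X)=2d+1$ is odd, every Stiefel--Whitney monomial of top degree involves an odd number of odd entries, and $x^2=0$ annihilates any such monomial with at least two odd entries; hence the only possibly nonzero Stiefel--Whitney numbers come from sequences $(2j+1,2i_1,\dots,2i_k)$ with $j+\sum_t i_t=d$, for which
\[
w_{(2j+1,2i_1,\dots,2i_k)}(P(1,X))=(d-j)\,x\,\tilde{c}_j(X)\,\tilde{c}_{i_1}(X)\cdots\tilde{c}_{i_k}(X).
\]
Pairing with $\mu_{P(1,X)}$ and using that $x=[X]$ is the Poincar\'e dual of the fiber together with $j^*\tilde{c}_a(X)=c_a(X)$, this evaluates to $(d-j)\langle c_j(X)c_{i_1}(X)\cdots c_{i_k}(X),\mu_X\rangle$, a multiple of the Stiefel--Whitney number $w_{2j,2i_1,\dots,2i_k}[X]$ of $X$. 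So every Stiefel--Whitney number of $P(1,X)$ is a multiple of one of $X$; if $[X]=0$ they all vanish, which is exactly the contrapositive of (ii).

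The step demanding the most care is the passage from the cohomology class $w_L(P(1,X))$ to the number $w_L[P(1,X)]$: one must be sure that the difference between the product $\tilde{c}_j(X)\tilde{c}_{i_1}(X)\cdots\tilde{c}_{i_k}(X)$ and a genuine lift of $c_j(X)c_{i_1}(X)\cdots c_{i_k}(X)$ is immaterial. It is, because that difference lies in $\ker j^*$ whereas the single factor $x$ and integration over the fiber see only $j^*$; the analogous point in part (i) — that the $x^2$-corrections to $w_{2i}(P(m,X))$ die after multiplication by $x^m$, since $x^{m+1}=0$ — is already dealt with in the text. Beyond this, the argument is bookkeeping.
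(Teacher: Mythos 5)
Your proposal is correct and follows essentially the same route as the paper: part (i) is the same assembly of the identity $w_{1^m.2I}[P(m,X)]=w_{2I}[X]$ from the preceding discussion with the observation that almost-complexity forces any detecting Stiefel--Whitney number of $X$ to have all even indices, and part (ii) is the paper's argument stated in contrapositive form, using $x^2=0$ to reduce every top-degree Stiefel--Whitney monomial of $P(1,X)$ to $(d-j)\,x\,\tilde{c}_j\tilde{c}_{i_1}\cdots\tilde{c}_{i_k}$ and hence to a multiple of a Stiefel--Whitney number of $X$. The point you flag about the ambiguity of the lifts $\tilde{c}_a$ is correctly resolved exactly as the paper implicitly does, via $j^*$ being a ring homomorphism and the pairing against $x^m$ seeing only $j^*$.
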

\begin{proof}  (i) Since $m\equiv d\mod 2$, we have $w_1(P(m,X))=x$.  Since the odd Stiefel-Whitney 
classes $w_{2i+1}(X)$ vanish (as $X$ is an almost complex manifold), $[X]\ne 0$ implies that we must have 
that $w_{2I}[X]\ne 0$ for some $I$ with $|I|=d$.  
Then, by our above discussion $w_J[P(m,X)]\ne 0$ where $J=1^m.2I.$   This 
proves the first assertion.  

(ii)  Let $m=1$. $\dim P(1,X)=1+2d$ is odd.   Using $x^2=0$, we have, from the above discussion, that 
$w_{2j}(P(1,X))=\tilde{c}_j(X)$ and $w_{2j+1}(P(1,X))=(d-j)x\tilde c_j(X)$.   
Suppose that $w_J[P(1,X)]\ne 0$.  Then we see that exactly one term, say $j_k$, in $J$ must be odd. 
Write $j_k=2s+1$ where $s\ge 0$.    If $d-s$ is even, then $w_J[P(1,X)]=0$. So $d-s$ is odd and we have 
$w_J(P(1,X))=x \tilde{c}_I(X)$ where $2I$ is obtained from $J$ by replacing $j_k$ by $j_k-1$.  
Therefore $w_{2I}[X]=w_J[P(1,X)]\ne 0$.
This completes the proof.
\end{proof}

It remains to prove Theorem \ref{null-grassmann}.   The proof will involve finding an action of an elementary 
abelian $2$-group action on $P(m,\mathbb CG_{n,k})$ without stationary points.  In order to achieve this, we need to find   
certain units in a complex Clifford algebra $C^c_r$ which act on its simple modules as {\it real} transformations.  This is straightforward 
using the structure of real Clifford algebras $C_r, C'_r$ if $r=2p$, $p\equiv 1,3,4\mod 4$, but involves further considerations 
when $p\equiv 2\mod 4$.

\subsection*{Clifford algebras and their simple modules} 
We shall now recall the description and 
certain properties of real and complex Clifford algebras.  We refer the reader to \cite{husemoller} for 
details.

Let $C_r$ (resp. $C'_r$) be the Clifford algebra associated to 
$(\mathbb{R}^{r}, -||\cdot||^2)$ (resp. $(\mathbb R^r, ||\cdot||^2))$.    
Thus $C_r$ is generated as an $\mathbb R$-algebra by the 
elements $\phi_1,\cdots, \phi_r$ which satisfy the relations 
$\phi_i^2=-id~\forall i,$ and 
$\phi_i\circ\phi_j=-\phi_j\circ \phi_i, 1\le i<j\le r$.   Similarly $C'_r$ is generated as an $\mathbb R$-algebra by $\psi_1,\ldots, 
\psi_r$ which satisfy the relations $\psi_i^2=id ~\forall i,$ and $\psi_i\psi_j=-\psi_j\psi_i, 1\le i<j\le r$.  
We shall denote by $C^c_r$ the complex Clifford algebra $C_r\otimes_\mathbb{R}\mathbb{C}$.   
Note that $C_r^c\cong C'_r\otimes_\mathbb R\mathbb C$ under an isomorphism that sends $\phi_j$ to $\sqrt{-1}\psi_j$.  
 Following the notation in Husemoller's book \cite{husemoller},  we 
denote the matrix algebra $M_m(A)$ over a division ring $A$ by 
$A(m)$.  It is known that $C^c_r$ is isomorphic to $\mathbb C(2^p)$ or $\mathbb C(2^p)\times \mathbb C(2^p)$ according as 
$r=2p$ or $r=2p+1$.  

It is well known that $C_r, C'_r$ are isomorphic to algebras of the form $A(2^t)$ or $A(2^{s})\times A(2^{s})$
where $A=\mathbb R, \mathbb C,$ or the quaternions $\mathbb H$.  The values of $t, s$ are 
determined by comparing the dimensions.  Using the fact that $A\otimes_\mathbb R\mathbb C\cong \mathbb C, \mathbb C\times 
\mathbb C, \mathbb C(2)$ according as $A=\mathbb R, \mathbb C, \mathbb H$ respectively, it is readily seen that 
$C_r^c$ is isomorphic to one of the algebras $\mathbb C(2^{p})\times \mathbb 
C(2^{p})$ or $\mathbb C(2^p)$, according as $r=2p+1$ or $2p$ respectively. 

We consider $\mathbb C^{2^p}$ as a module over $C^c_r$ where $r=2p$.    
For our purposes, it is important to know 
whether the elements $\phi_i\in C_r^c, 1\le i\le r$, or $\psi_i\in C_r^c, 1\le i\le r$, act on $\mathbb C^{2^p}$ as {\it real} transformations, that is if the elements are matrices with {\it real} entries in $C^c_r=\mathbb C(2^p)$.   
This is guaranteed to be the case if at least one of the algebras $C_r$ or $C'_r$ is isomorphic to $\mathbb R(2^p)$.  
We have isomorphisms of $\mathbb R$-algebras 
$C_2'\cong \mathbb R(2), C_6\cong \mathbb R(8), 
C_8\cong \mathbb R(16)$.  Also, 
$C_{r+8}\cong C_r\otimes \mathbb R(16), C'_{r+8}\cong C'_r\otimes \mathbb R(16)$.  Since $\mathbb R(k)\otimes \mathbb R(l)=
\mathbb R(kl)$ and $\mathbb R(k)\otimes_\mathbb R\mathbb C\cong \mathbb C(k)$, using the isomorphism $C_r\otimes_\mathbb R\mathbb C\cong C^c_r\cong C'_r\otimes_\mathbb R\mathbb C$, we see that 
when $r\equiv 2 \mod 8$, the elements $\psi_i\in C^c_r, 1\le i\le r,$ are represented by 
real matrices and that when $r\equiv 6, 8\mod 8$, the same property holds for $\phi_i\in C^c_r, 1\le i\le r$.    
Therefore, we see that when $p$ is a 
positive integer such that  $p\equiv 3,4
\mod 4$ (resp. $p\equiv 1\mod 4$) $\mathbb C^{2^p}$ has the structure of a simple $C^c_{2p}$-module on which 
$\phi_i , 1\le i\le 2p,$ (resp. $\psi_i, 1\le i\le 2p$) acts as real transformations, that is, via matrices with real entries.  
  
Let $p\equiv 2\mod 4$.  The real Clifford algebras $C_r, C_r'$ are {\it not} matrix algebras over the reals when $r=2p$ or $2p+1$.   
So we proceed as follows.
Write $r=2p=8q+4$.  We have the isomorphisms $C_{8q+2}'\cong \mathbb R(2^{4q+1})$ with its generators $\psi_i, 1\le i\le r-2$.  
Consider the $\mathbb R$-algebra $C$ generated by 
the elements $\theta_i, 1\le i\le r,$ expressed as $2\times 2$ block matrix with block sizes $p$ as follows:
\[\theta_i=\left \{ 
\begin{array}{ll} 
\bigl( \begin{smallmatrix} 0& \psi_i\\ -\psi_i& 0\end{smallmatrix}\bigr), & 1\le i\le r-2,\\
\bigl(\begin{smallmatrix} I&0\\0&-I\end{smallmatrix} \bigr), & i=r-1,\\
\big(\begin{smallmatrix} 0&I\\I&0\end{smallmatrix} \bigr), & i=r.\\
\end{array}
\right.
\]
Then the following relations are readily verified: (i) $\theta_i\theta_j=-\theta_j\theta_i$ if $1\le i<j\le r$, and, 
(ii) $\theta^2_i=-1$ if $1\le i\le r-2$ and $\theta_i^2=1$ if $i=r-1,r$.  Moreover, it is easily verified that 
$\mathbb R$-algebra generated by the $\theta_i$ 
equals $\mathbb R(2^p)$.  \footnote{Thus 
$C$ is the real Clifford algebra associated to the indefinite (non-degenerate) quadratic form with signature 
$(2,r-2)$. See \cite[ Chapter 13]{porteous}.}
Therefore $C\otimes_\mathbb R\mathbb C=\mathbb C(2^p)\cong C^c_r$.  
In particular, the elements $\theta_i, 1\le i\le r,$ act as real transformations on the simple module $\mathbb C^{2^p}$ of $C^c_r$.

\noindent
   {\bf Notation:}  
For $1\le i\le r$, we shall denote by $\theta_i\in C^c_r$ the element $\psi_i $ 
(resp. $\phi_i$) when $r\equiv 2 \mod 8$ (resp. $r\equiv 6,8\mod 8$).   When $r\equiv 4\mod 8$, the $\theta_i\in C^c_r$ are as defined 
above. 

The above discussion establishes the validity of the following lemma.  

\begin{lemma} \label{clifford1} Let $r=2p$ be any even positive number.  With the above notations, the elements $\theta_i\in C^c_r\cong \mathbb C(2^p), 1\le i\le r,$ satisfy the following conditions:\\
(i) $\theta_i\theta_j=-\theta_j\theta_i, i\ne j$ and $\theta_i^2=\pm 1$ for $i\le r$, \\
(ii) the $\mathbb R$-subalgebra of $C^c_r$ generated by $\theta_i, 1\le i\le r,$ is isomorphic to $\mathbb R(2^p)$,\\
(iii) the $\theta_i\in C^c_r$ act as a real transformation on the simple $C^c_r$ module $\mathbb C^{2^p}.$
 \hfill $\Box$
\end{lemma}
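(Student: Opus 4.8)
The plan is to verify (i)--(iii) by splitting into three cases according to the residue of $r=2p$ modulo $8$, exactly matching the three clauses of the Notation preceding the lemma; in each case the assertions drop out of the structure theory of real Clifford algebras together with the two elementary facts $\mathbb{R}(k)\otimes_\mathbb{R}\mathbb{R}(l)\cong\mathbb{R}(kl)$ and $\mathbb{R}(k)\otimes_\mathbb{R}\mathbb{C}\cong\mathbb{C}(k)$ recalled above.

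First I would dispose of the two cases $r\equiv 2\mod 8$ and $r\equiv 6,8\mod 8$ together. Starting from the base isomorphisms $C'_2\cong\mathbb{R}(2)$, $C_6\cong\mathbb{R}(8)$, $C_8\cong\mathbb{R}(16)$ and iterating the periodicity isomorphisms $C_{r+8}\cong C_r\otimes_\mathbb{R}\mathbb{R}(16)$ and $C'_{r+8}\cong C'_r\otimes_\mathbb{R}\mathbb{R}(16)$, one obtains $C'_r\cong\mathbb{R}(2^p)$ when $r\equiv 2\mod 8$ and $C_r\cong\mathbb{R}(2^p)$ when $r\equiv 6,8\mod 8$. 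In the first case we take $\theta_i=\psi_i$, in the second $\theta_i=\phi_i$; then (i) is simply the defining relations of the relevant Clifford algebra, (ii) is the isomorphism just obtained, and (iii) follows by tensoring that isomorphism with $\mathbb{C}$: $C^c_r\cong\mathbb{R}(2^p)\otimes_\mathbb{R}\mathbb{C}\cong\mathbb{C}(2^p)$ acts on its unique simple module $\mathbb{C}^{2^p}$, and since the $\theta_i$ already lie in the real subalgebra $\mathbb{R}(2^p)$ they act by matrices with real entries.

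The remaining case $r\equiv 4\mod 8$, say $r=8q+4$, is the one that requires genuine work, since now neither $C_r$ nor $C'_r$ is a matrix algebra over $\mathbb{R}$ and the previous argument breaks down. Here I would use the explicit elements $\theta_i$ written just before the lemma as $2\times2$ block matrices with blocks of size $2^{p-1}$, assembled from the generators $\psi_i\in C'_{8q+2}\cong\mathbb{R}(2^{4q+1})=\mathbb{R}(2^{p-1})$, $1\le i\le r-2$. Relations (i), including the sign of each $\theta_i^2$, are a short block-matrix computation, indicated above. For (ii) the key point is to show that the $\mathbb{R}$-algebra $C$ generated by the $\theta_i$ is all of $\mathbb{R}(2^p)$: from $\theta_{r-1}$ and $\theta_r$ one reads off the four $2\times2$ block matrix units, and multiplying these against $\theta_i$ for $i\le r-2$ produces $\mathrm{diag}(\psi_i,0)$ and $\mathrm{diag}(0,\psi_i)$; hence $C$ contains $M_2$ of the $\mathbb{R}$-algebra generated by the $\psi_i$, which is $C'_{8q+2}\cong\mathbb{R}(2^{p-1})$, so $C\supseteq M_2(\mathbb{R}(2^{p-1}))=\mathbb{R}(2^p)$, and as $C$ is visibly contained in $\mathbb{R}(2^p)$ we conclude $C=\mathbb{R}(2^p)$. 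Then (iii) again follows by complexifying: $C\otimes_\mathbb{R}\mathbb{C}\cong\mathbb{C}(2^p)\cong C^c_r$, its simple module is $\mathbb{C}^{2^p}$, and the $\theta_i$ sit inside the real form $C$.

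I expect the only step that is not pure bookkeeping to be the identity $C=\mathbb{R}(2^p)$ in the case $r\equiv 4\mod 8$, and even there the argument is short once the matrix units are seen to lie in $C$. The main practical obstacle is therefore just keeping the case division and the two families of generators $\{\psi_i\}$, $\{\phi_i\}$ straight, and being careful that in each case the correct one of $C_r$, $C'_r$ is the matrix algebra over $\mathbb{R}$.
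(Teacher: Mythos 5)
Your proposal is correct and follows essentially the same route as the paper: the two "good" residue classes are handled via $C'_2\cong\mathbb{R}(2)$, $C_6\cong\mathbb{R}(8)$, $C_8\cong\mathbb{R}(16)$ and the period-$8$ isomorphisms, and the case $r\equiv 4\bmod 8$ via the same explicit block matrices built from $C'_{8q+2}\cong\mathbb{R}(2^{4q+1})$. Your matrix-unit argument just fills in the step the paper dismisses as "easily verified" (that the $\theta_i$ generate all of $\mathbb{R}(2^p)$), and it is sound.
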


We are now ready to prove Theorem \ref{null-grassmann}.

{\it Proof of Theorem \ref{null-grassmann}}: (i).  Write $n=2^p n_0 $ where $n_0$ is odd and $p\ge 1$.  Suppose that $2^p$ does not divide $k$.

Now let $r=2p$.   We regard $\mathbb C^n$ as a sum of $n_0$ copies of the 
simple $C_r^c$-module $\mathbb C^{2^p}$.  
With notations as in Lemma \ref{clifford1}, 
let $t_i, 1\le i\le r$, denote the 
smooth map of the complex Grassmann manifold $\mathbb CG_{n,k}$ defined as $V\mapsto \theta_i(V), 1\le i\le r$. 
Then $t_i^2=id$ for $i\le r$ since $\theta^2_i=\pm 1$. Also $t_it_j=t_jt_i$ for $1\le i<j\le r$ since $\theta_i\theta_j=-\theta_j\theta_i$.  
So, the $t_i$ define a smooth action of the group $(\mathbb Z/2\mathbb Z)^r$.   Any stationary point $V$ of this action 
is a complex vector space of dimension $k$ such that $\theta_i(V)~\forall i\le r$.  This means that $V$ is a module of over the 
$\mathbb C$-algebra generated by the $\theta_i, 1\le i\le r$, that is, $V$ is a $C^c_r$-module.  In particular 
the $(\mathbb Z/2\mathbb Z)^r$-action on $\mathbb CG_{n,k}$ is stationary point free since $k$ is not divisible by $2^p$.  

The fact that the $\theta_i$ are real transformations implies that the $t_i$ commute with complex conjugation $\sigma$, defined as 
$\sigma(V)= \overline V$.  This means that the $t_i$ define an involution, again denoted $t_i$, on the 
generalized Dold manifold $P(m, \mathbb CG_{n,k}).$  Explicitly, $t_i([u, V])=[u, t_i(V)]$ is meaningful 
since $(-u, t_i(\overline V))
=(-u, \overline{t_i(V)})\sim (u,t_i(V))$.  

We claim that the action of $(\mathbb Z/2\mathbb Z)^r$ has no stationary points.  Indeed, $[u,V]=t_i([u,V])=[u, t_i(V)]$ 
implies that $t_i(V)=V$ and so if $[u, V]\in P(m,\mathbb CG_{n,k})$ is a stationary point, then $V\in \mathbb CG_{n,k}$ would be a 
stationary point, contrary to what was just observed.  
Now, by \cite[Theorem 30.1]{cf}, it follows that $[P(m,X)]=0$. 

(ii) Suppose that $\nu_2(n)=\nu_2(k)$.  
 Then $[\mathbb{C}G_{n,k}]\ne 0$ by the main theorem of \cite{sankaran}.  (See also \cite{sankaran1}.) 
Note that 
$\dim_\mathbb{C}\mathbb{C}G_{n,k}$ is even in this case.  If $m$ is also even, then it follows that 
$[P(m,\mathbb{C}G_{n,k})]\ne 0$ by  
Theorem \ref{null-nonorient}(i).  \hfill $\Box$

\begin{remark}\label{null-flag}{\em
It appears to be unknown precisely which (real or complex) flag manifolds are unoriented boundaries.  
Let $n_1,\ldots, n_r\ge 1$ be 
integers and let $n=\sum_{1\le j\le r} n_j$.  
Proceeding as in the case of the $P(m,\mathbb C G_{n,k})$ it is readily seen 
that $[\mathbb{C}G(n_1,\ldots,n_r)]$ and
$[P(m;n_1,\ldots,n_r)]$ in $\frak{N}$ are zero if $\nu_2(n)>\nu_2(n_j)$ for some $j$.  Also, if $n_i=n_j$ for some $i\ne j$, then 
$X:=\mathbb{C}G(n_1,\ldots, n_r)$ admits a fixed point free involution $t_{i,j}$, which swaps the $i$-th  and the $j$-component 
of each flag ${\bf L}$ in $X$.  Clearly $t_{i,j}(\bar {\bf L})=\overline{t_{i,j}({\bf L})}, {\bf L}\in X,$ and so we obtain 
an involution $[v, {\bf L}]\mapsto [v, t_{i,j}({\bf L})]$  on $P(m;n_1,\ldots,n_r)$, which is again fixed point free. 
It follows that $[P(m;n_1,\ldots, n_r)]=0$ in this case. 
If $m\equiv d\mod 2$ where $d=\dim_\mathbb{C} X=\sum_{1\le i<j\le r}n_in_j$ and 
if $[X]\ne 0$, then $[P(m;n_1,\ldots, n_r)]\ne 0$ by Theorem \ref{null-nonorient}.  For example, it is 
known that $\chi(X)= n!/(n_1!.\ldots. n_r!)$.  So 
if $m$ and $d$ are even and if $n!/(n_1!.\ldots. n_r!)$ 
is odd, then $\chi(P(m;n_1,\ldots,n_r))$ is also odd and so $[P(m;n_1,\ldots,n_r)]\ne 0$. 
}
\end{remark}
\noindent
{\bf Acknowldegments:}  Sankaran thanks Peter Zvengrowski for bringing to his attention the papers of J\'ulius Korba\v s \cite{korbas} and Peter Novotn\'y \cite{novotny}.





\end{document}